\tikzstyle{vertex}=[circle,draw, inner sep=0pt, minimum size=5pt] 
\newcommand{\vertex}{\node[vertex]}
\def \be {{\bemma}}
\def \L {\mathfrak{L}}
\def \J {\mathbb{J}}
\def \D {\mathfrak{D}}
\def \I {\mathbb{I}}
\def \S {\mathfrak{S}}
\def \x {{\lambda_1}}
\def \y {{\lambda_2}}
\def \z {{\lambda_3}}
\def \n {{\eta}}
\def \u {{\mu}}
\def \be {{\beta_m}}
\newtheorem*{theorem*}{Theorem}
\newtheorem{theorem}{Theorem}
\newtheorem{re}[theorem]{Remark}
\newtheorem{cor}[theorem]{Corollary}
\newtheorem{lemma}[theorem]{Lemma}
\newtheorem{ex}[theorem]{Example}
\newtheorem{definition}[theorem]{Definition}
\newtheorem{pro}[theorem]{Proposition}
\title{Algebraic connectivity of Kronecker products of line graphs}
\author{Shivani Chauhan,  A. Satyanarayana Reddy\\\\
Department of 
Mathematics, Shiv Nadar 
Institution of Eminence, India-201314\\ (e-mail: 
sc739@snu.edu.in, satya.a@snu.edu.in)}
\begin{document}
\maketitle
\begin{abstract}
Let $X$ be a tree with $n$ vertices and $L(X)$ be its line graph. In this work, we completely characterize the trees for which the algebraic connectivity of $L(X)\times K_m$ is equal to $m-1$, where $\times$ denotes the Kronecker product. We provide a few necessary and sufficient conditions for $L(X)\times K_m$ to be Laplacian integral. The algebraic connectivity of $L(X)\times K_m$, where $X$ is a tree of diameter 4 and $k$-book graph is discussed.
\end{abstract}

\textbf{Keywords}: Tree, line graph, Kronecker product of graph, Laplacian matrix, algebraic connectivity.\\
\textbf{Subject Classification (2020): 05C05, 05C76}.

\section{Introduction}
The {\em Laplacian matrix} $\L(X)$ of a graph $X$ is defined as $\D(X)-A(X)$, where $A(X)$ is the {\em adjacency matrix} and $\D(X)$ is the diagonal matrix with diagonal entries as the row sums of $A(X).$ It is well known that $\L(X)$ is singular and positive semidefinite. Fiedler \cite{fiedler1973algebraic}, showed that the second smallest eigenvalue of $\L(X)$ is zero if and only if $X$ is disconnected. This eigenvalue is termed as the {\em algebraic connectivity} of graph $X$, denoted by $a(X)$, and the corresponding eigenvector is known as the {\em Fiedler vector}. The collection of all the eigenvalues of a matrix together with its multiplicities is known as the {\em spectrum} of that matrix. The spectrum of the Laplacian matrix is called {\em Laplacian spectrum}.
For more information on Laplacian matrices and algebraic connectivity one can refer to the survey article of Merris \cite{merris1994laplacian} and Maria \cite{de2007old}, respectively. The {\em path graph, complete graph} and the {\em star graph} on $n$ vertices are denoted by $P_n, K_n$ and $K_{1,n-1}$, respectively. Grone {\em et al.}\cite{grone1990laplacian} proved that if $X\neq K_{1,n-1}$ is a tree with $n\geq 6$ vertices, then $a(X)<0.49.$ Also, if $X\neq K_2$ is a tree, then $a(X)\in(0,1]$, and it is 1 if and only if $X=K_{1,n-1}.$
\par
The {\em Kronecker product} $X_ 1\times X_2$ of graphs $X_1$ and $X_2$ is a graph such that the vertex set is $V(X_1)\times V(X_2)$, vertices $(x_1,x_2)$ and $(x_1^\prime,x_2^\prime)$ are adjacent if $x_1$ is adjacent to $x_1^\prime$ and $x_2$ is adjacent to $x_2^\prime.$ The adjacency matrix of $X_1\times X_2$ is $A(X_1)\otimes A(X_2)$, where $A\otimes B$ denotes the Kronecker product of  matrices $A$ and $B.$ We use $\I_n$ and $\J_n$ to denote the identity matrix and the matrix with all entries 1 of order $n$, respectively, and  $\bold{0_{m\times n}}$ denotes the null matrix.
It is easy to see that
$$\L(X\times K_m)
=\J_m\otimes(-A(X)) +\I_m\otimes(A(X)+(m-1)\D(X)).$$
The structure of $\L(X\times K_m)$ is given by
\begin{equation*}
\L(X\times K_m)=\begin{bmatrix}
(m-1)\D(X) & -A(X) & -A(X) & \cdots & -A(X)\\
-A(X) & (m-1)\D(X) & -A(X) & \cdots & -A(X)\\
-A(X) & -A(X) & (m-1)\D(X) & \cdots & -A(X)\\
\vdots & \vdots & \vdots & \ddots & \vdots\\
-A(X) & -A(X) & -A(X) & \cdots & (m-1)\D(X)
\end{bmatrix}.
\end{equation*}
The spectrum of $\L(X\times K_m)$ can be obtained from the spectrum of $(m-1)\L(X)$ and $A(X)+(m-1)\D(X).$
Let $\lambda$ be an eigenvalue of $\L(X)$ with its corresponding eigenvector $v$, then for $V=(v,v,\ldots,v)^T$, $\L(X\times K_m)V=(m-1)\lambda V.$ Suppose that $\lambda^\prime$ be an eigenvalue of $A(X)+(m-1)\D(X)$ with corresponding eigenvector $v^\prime.$ Then  $$\{(v^\prime,-v^\prime,\bold{0_{n\times 1}},\bold{0_{n\times 1}},\ldots,\bold{0_{n\times 1}})^T,
(v^\prime,\bold{0_{n\times 1}},-v^\prime,\bold{0_{n\times 1}},\ldots,\bold{0_{n\times 1}})^T,\ldots,(v^\prime, \bold{0_{n\times 1}},\bold{0_{n\times 1}},\ldots,-v^\prime)^T
\}$$ are $m-1$ dimensional linearly independent eigenvectors of $\L(X\times K_m).$ As the sum of the multiplicities of the obtained eigenvalues equals the number of vertices in $X\times K_m$, it shows that the union of the spectrum of  $A(X)+(m-1)\D(X)$ and $(m-1)\L(X)$ with multiplicity $m-1$ and 1 respectively, yields the spectrum of $\L(X\times K_m).$ For the sake of convenience, we denote the matrix $A(X)+(m-1)\D(X)$ by $Q_{m-1}(X)$ or simply $Q_{m-1}.$ One can see that $Q_1(X)$ is the {\em signless Laplacian matrix} of the graph $X.$ For more information about $Q_1$ one can refer  \cite{cvetkovic2009towards,cvetkovic2010towards2,cvetkovic2010towards3}.
We illustrate the above process by computing the spectrum of $K_{n-1}\times K_m$ in Example \ref{ex:star}.
\par
\begin{ex}\label{ex:star}
The eigenvalues of $\L(K_{n-1}\times K_m)$ are given by the union of spectra of $\J_{n-1}+((m-1)(n-2)-1)\I_{n-1}$ and $(m-1)(-\J_{n-1}+(n-1)\I_{n-1}).$ The eigenvalues of $\J_n$ are $0$ and $n$ with multiplicity $n-1$ and $1,$ respectively. It is easy to check that the eigenvalues of $\L(K_{n-1}\times K_m)$ are $0,(m-1)(n-2)-1,n+(m-1)(n-2)-2$ and $(n-1)(m-1).$ Therefore, $a(K_{n-1}\times K_m)=(n-2)(m-1)-1.$
\end{ex}
\par
If $X$ is a tree, we denote the graph $L(X)\times K_m$ by $\be(X)$, where $L(X)$ denotes the line graph of $X.$ The objective of this paper is to compute $a(\be(X)).$ Note that, $a(\be(X))=min\{(m-1)a(L(X)),q_{min}^{m-1}(L(X))\},$ where $q_{min}^{m-1}(Y)$ denotes the smallest eigenvalue of $Q_{m-1}(Y).$ Hence, the problem boils down to computing $a(L(X))$ and $q_{min}^{m-1}(L(X)).$ From Example \ref{ex:star}, $a(\be(K_{1,n-1}))\linebreak=(n-2)(m-1)-1$ as $L(K_{1,n-1})=K_{n-1}.$ The converse is also true {\it i.e.,} if
$a(\be(X))=(n-2)(m-1)-1$, then $X=K_{1,n-1}.$ It is well known that the line graph of a tree has a cut vertex and $a(X)\leq \kappa (X)$, where $X\neq K_{n}$ and $\kappa(X)$ denotes the vertex connectivity of $X.$ This implies $a(L(X))\leq 1.$ The spectrum of $(m-1)\L(L(X))$ is contained in the spectrum of $\L(\be(X))$, therefore $0\leq a(\be(X))\leq m-1.$ Thus one can see that there is a large gap between $a(\be(K_{1,n}))$ and $a(\be(X)),$ where $X$ is different from $K_{1,n-1}$. 
\par
We know that $a(\be(X))\geq 0.$ By the result of Imrich and Kla\v{v}zar \cite{imrich2000s}, which states that $X_1\times X_2$ is connected if and only if both $X_1$ and $X_2$ are connected and at least one of them is non-bipartite, we deduce that $a(\be(X))=0$ if and only if $X=P_n$ and $m=2.$ So we define, 
$$\S_m=\{X \mid \mbox{$X$ is a tree and}\; 0<a(\be(X))\leq m-1\}.$$ In Figure \ref{fig:Structure1}, we demonstrate the structure of $X, L(X),\beta_3(X),\beta_4(X)$ and $\beta_5(X)$, where $X=P_4.$ More examples of $a(\be(X))$ can be found in Table \ref{tab:algebraic}.

\begin{figure}[h]
\centering
\begin{tabular}{ccccc}
     \begin{tikzpicture}[scale=0.5]
    \vertex (1) at (0,1){};
    \vertex (2) at (1,1) {};
    \vertex (3) at (2,1){};
    \vertex (4) at (3,1){};
    \path[-]
    (1) edge (2)
    (2) edge (3)
    (3) edge (4)
    ;
\end{tikzpicture}
&
\begin{tikzpicture}[scale=0.5]
    \vertex (1) at (0,1){};
    \vertex (2) at (1,1){};
    \vertex (3) at (2,1){};
    
    \path[-]
    (1) edge (2)
    (2) edge (3)
    ;
\end{tikzpicture}
&
\begin{tikzpicture}
    \vertex (1) at (0,0){};
    \vertex (2) at (0,1) {};
    \vertex (3) at (0,2) {};
    \vertex (4) at (1,0){};
    \vertex (5) at (1,1) {};
    \vertex (6) at (1,2) {};
    \vertex (7) at (2,0) {};
    \vertex (8) at (2,1) {};
    \vertex (9) at (2,2) {};
    \path[-]
    (4) edge (2)
    (7) edge (2)
    (1) edge (5)
    (7) edge (5)
    (8) edge (1)
    (8) edge (4)
    (3) edge (5)
    (3) edge (8)
    (6) edge (2)
    (6) edge (8)
    (9) edge (2)
    (9) edge (5)
    ;
\end{tikzpicture}
 & 
 \begin{tikzpicture}
    \vertex (1) at (0,0){};
    \vertex (2) at (0,1) {};
    \vertex (3) at (0,2) {};
    \vertex (4) at (1,0){};
    \vertex (5) at (1,1) {};
    \vertex (6) at (1,2) {};
    \vertex (7) at (2,0) {};
    \vertex (8) at (2,1) {};
    \vertex (9) at (2,2) {};
    \vertex (10) at (3,0) {};
    \vertex (11) at (3,1) {};
    \vertex (12) at (3,2) {};
    \path[-]
    (4) edge (2)
    (7) edge (2)
    (1) edge (5)
    (7) edge (5)
    (8) edge (1)
    (8) edge (4)
    (3) edge (5)
    (3) edge (8)
    (6) edge (2)
    (6) edge (8)
    (9) edge (2)
    (9) edge (5)
    (3) edge (11)
    (12) edge (2)
    (12) edge (5)
    (12) edge (8)
    (11) edge (9)
    (6) edge (11)
    (11) edge (1)
    (11) edge (4)
    (11) edge (7)
    (2) edge (10)
    (10) edge (5)
    (8) edge (10)
    ;
\end{tikzpicture}
& 
\begin{tikzpicture}
    \vertex (1) at (0,0){};
    \vertex (2) at (0,1){};
    \vertex (3) at (0,2){};
    \vertex (4) at (1,0){};
    \vertex (5) at (1,1){};
    \vertex (6) at (1,2){};
    \vertex (7) at (2,0){};
    \vertex (8) at (2,1){};
    \vertex (9) at (2,2){};
    \vertex (10) at (3,0){};
    \vertex (11) at (3,1){};
    \vertex (12) at (3,2){};
    \vertex (13) at (4,0){};
    \vertex (14) at (4,1){};
    \vertex (15) at (4,2){};
    \path[-]
    (4) edge (2)
    (7) edge (2)
    (1) edge (5)
    (7) edge (5)
    (8) edge (1)
    (8) edge (4)
    (3) edge (5)
    (3) edge (8)
    (6) edge (2)
    (6) edge (8)
    (9) edge (2)
    (9) edge (5)
    (3) edge (11)
    (12) edge (2)
    (12) edge (5)
    (12) edge (8)
    (11) edge (9)
    (6) edge (11)
    (11) edge (1)
    (11) edge (4)
    (11) edge (7)
    (2) edge (10)
    (10) edge (5)
    (8) edge (10)
    (15) edge (2)
    (15) edge (5)
    (15) edge (8)
    (15) edge (11)
    (14) edge (3)
    (14) edge (6)
    (14) edge (9)
    (14) edge (12)
    (13) edge (2)
    (13) edge (5)
    (13) edge (8)
    (13) edge (11)
    (14) edge (1)
    (14) edge (4)
    (14) edge (7)
    (14) edge (10)
    ;
\end{tikzpicture}
\\
\end{tabular}
\caption{$P_4,L(P_4),\beta_3(P_4),\beta_4(P_4)$ and $\beta_5(P_4)$ }
\label{fig:Structure1}
\end{figure}
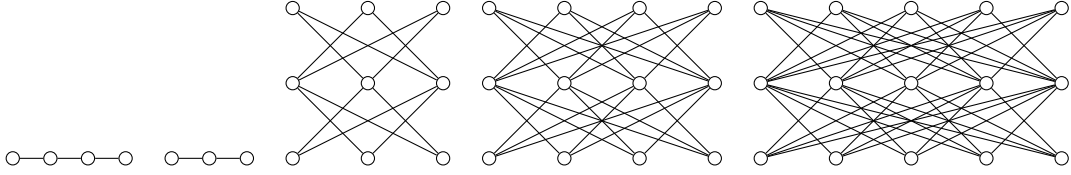
\par
This paper is organized as follows: In Section \ref{sec:2}, we explore all $X\in \S_m$ for which $a(\be(X))=m-1$, and necessary and sufficient conditions for $\be(X)$ to be Laplacian integral is given. In Section \ref{sec:3}, we compute the algebraic connectivity of $L(X)\times K_m,$ where $X$ is a tree of diameter 4 and $k$-book graph.
\section{Main results}\label{sec:2}
 Let $T(k,s,t)$ denotes the tree with $n=s+t+k+1$ vertices, constructed by taking a path on vertices $\{1,2,\ldots,k+1\}$, adding $s$ and $t$ pendant vertices adjacent to vertex $1$ and $k+1$, respectively. In Figure \ref{fig:T(2,2,3)}, $T(2,3,2)$ is given.
 In this section, it is proved that if $X\in \S_m$, then $a(\be(X))=m-1$ if and only if $X$ is a $T(1,s,t)$, where $s,t\geq 2$. As a consequence, we find all $X$ for which $\be(X)$ is the Laplacian integral. We observed that $L(T(k,s,t))$ is a restricted graph. Hence, we defined a restricted graph and classified all restricted graphs $X$ for which $a(X\times K_m)=m-1.$ We mention a few results that will be used later. 
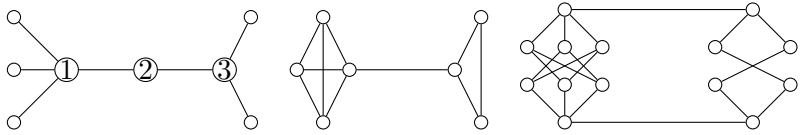
\begin{figure}[h]
\centering
\begin{tabular}{ccc}
     \begin{tikzpicture}[scale=0.7]
    \vertex (-1) at (-1,0){};
    \vertex (1) at (0,0) {1};
    \vertex (2) at (1.5,0){2};
    \vertex (3) at (-1,-1){};
    \vertex (4) at (-1,1) {};
    \vertex (5) at (3,0) {3};
    \vertex (6) at (3.5,-1){};
    \vertex (7) at (3.5,1){};
    \path[-]
    (1) edge (2)
    (1) edge (3)
    (-1) edge (1)
    (1) edge (4)
    (2) edge (5)
    (5) edge (6)
    (5) edge (7)
    ;
\end{tikzpicture}
&
\begin{tikzpicture}[scale=0.7]
    \vertex (1) at (0,0){};
    \vertex (2) at (1,0){};
    \vertex (3) at (0.5,-1){};
    \vertex (4) at (0.5,1){};
    \vertex (5) at (3,0){};
    \vertex (6) at (3.5,-1){};
    \vertex (7) at (3.5,1){};
   
    \path[-]
    (1) edge (2)
    (1) edge (3)
    (1) edge (4)
    (2) edge (5)
    (5) edge (6)
    (5) edge (7)
    (2) edge (4)
    (2) edge (3)
    (3) edge (4)
    (6) edge (7)
    ;
\end{tikzpicture}
&
\begin{tikzpicture}[yscale=0.5]
    \vertex (1) at (1.5,2){};
    \vertex (2) at (4,2) {};
    \vertex (3) at (1,1) {};
    \vertex (4) at (1.5,1){};
    \vertex (5) at (2,1) {};
    \vertex (6) at (3.5,1) {};
    \vertex (7) at (4.5,1) {};
    \vertex (9) at (1.5,-1) {};
    \vertex (10) at (4,-1) {};
    \vertex (11) at (1,0) {};
    \vertex (12) at (1.5,0){};
    \vertex (13) at (2,0) {};
    \vertex (14) at (3.5,0) {};
    \vertex (16) at (4.5,0) {};
    \path[-]
    (1) edge (2)
    (1) edge (3)
    (1) edge (4)
    (1) edge (5)
    (2) edge (6)
    (2) edge (7)
    (9) edge (10)
    (9) edge (11)
    (9) edge (13)
    (9) edge (12)
    (10) edge (14)
    (10) edge (16)
    (3) edge (12)
    (3) edge (13)
    (4) edge (11)
    (4) edge (13)
    (5) edge (11)
    (5) edge (12)
    (6) edge (16)
    (7) edge (14)
    ;
\end{tikzpicture}
\\
\end{tabular}
\caption{$T(2,3,2),L(T(2,3,2))$ and $\beta_2(T(2,3,2))$}
\label{fig:T(2,2,3)}
\end{figure}
\par

\begin{pro}
\cite[Corollary 2.1]{kirkland2000bound}\label{thm:cutpoint}
Let $X$ be a connected graph with a cut vertex $v.$ Then $a(X)=1$ if and only if $v$ is  adjacent to every vertex of $X$.
\end{pro}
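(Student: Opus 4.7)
The plan is to prove both implications separately, with the bound $a(X)\leq \kappa(X)=1$ (which applies since $v$ is a cut vertex) acting as a common ceiling.

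For the sufficient direction, assume the cut vertex $v$ is adjacent to every other vertex, and let $C_1,\dots,C_k$ with $k\geq 2$ be the components of $X-v$. I would exhibit $1$ as a Laplacian eigenvalue by a direct construction. Pick any two components $C_1,C_2$ with sizes $n_1,n_2$, and define $f\in\mathbb{R}^{|V(X)|}$ by $f|_{C_1}=n_2$, $f|_{C_2}=-n_1$, and $f=0$ elsewhere. Then $\mathbf{1}^T f=0$. Using that every vertex of $C_i$ has $v$ as its only neighbour outside $C_i$, a short row-by-row calculation gives $\L(X)f=f$: on $C_i$ the degree-term minus the sum over intra-component neighbours leaves exactly one copy of $f(u)$ (contributed by the $v$-edge), while at $v$ the contributions from $C_1$ and $C_2$ cancel and the other components are $0$. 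Since $a(X)\leq 1$ and we have produced a Laplacian eigenvector with eigenvalue $1$ orthogonal to $\mathbf{1}$, we conclude $a(X)=1$.

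For the necessary direction, assume $a(X)=1$ and suppose for contradiction that some vertex $u\neq v$ is not adjacent to $v$. Let $C$ be the component of $X-v$ containing $u$ and $C'$ another component. The strategy is to construct a test vector $g\in\mathbb{R}^{|V(X)|}$ with $\mathbf{1}^Tg=0$ whose Rayleigh quotient $g^T\L(X)g/g^Tg$ is strictly less than $1$, contradicting $a(X)=1$. The heuristic is to perturb the vector $f$ from the first direction: push mass toward $u$ (which is now distance $\geq 2$ from $v$) and shrink the value at vertices of $C$ adjacent to $v$. Because the only edges of $\L(X)$ that ``see'' $v$ are the $v$-incident edges, decreasing the value of $g$ at neighbours of $v$ in $C$ strictly reduces the dominant terms in $g^T\L(X)g$, while the new edges inside $C$ only contribute a second-order correction if the decay is chosen gently enough. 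Balancing amplitudes on $C$ and $C'$ to preserve $\mathbf{1}^Tg=0$ then yields the strict inequality.

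The main obstacle is the second direction: producing a test vector whose Rayleigh quotient is provably below $1$ requires handling both the intra-$C$ and the $v$-incident edge contributions simultaneously, and the details depend on the internal structure of $C$. A cleaner alternative I would try is eigenvalue interlacing on the principal submatrix indexed by $C\cup\{v\}$: the non-adjacency of $u$ and $v$ forces this submatrix to have a Perron-type eigenvalue structure that is strictly less ``tight'' than in the all-adjacent case, and interlacing transfers this slack to $a(X)$. Either route reduces the problem to the single observation that distance $\geq 2$ from the cut vertex creates strict slack in the Laplacian quadratic form.
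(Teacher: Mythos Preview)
The paper does not prove this proposition at all; it is quoted from Kirkland \cite{kirkland2000bound} and used as a black box. So there is no ``paper's own proof'' to compare against, and your attempt stands or falls on its own merits.

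There is a genuine gap in your sufficient direction. You correctly construct a vector $f\perp\mathbf{1}$ with $\L(X)f=f$, so $1$ is a Laplacian eigenvalue; and you correctly note $a(X)\le\kappa(X)=1$. But these two facts together do \emph{not} give $a(X)=1$: nothing rules out an eigenvalue in $(0,1)$. What you are missing is the lower bound $a(X)\ge 1$. The clean way to obtain it is to observe that when $v$ is adjacent to every other vertex, $X$ is the join $K_1\vee(X-v)$, and the Laplacian spectrum of a join $K_1\vee H$ on $n$ vertices is $\{0,n\}\cup\{\mu+1:\mu\in\mathrm{Spec}(\L(H)),\ \text{one copy of }0\text{ removed}\}$. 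Since every $\mu\ge 0$, all nonzero eigenvalues of $\L(X)$ are at least $1$, hence $a(X)\ge 1$. Your eigenvector construction then witnesses equality (indeed, it corresponds to the extra copies of $0$ in $\L(X-v)$ coming from its disconnectedness).

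Your necessary direction is, as you acknowledge, only a heuristic. The perturbed test vector idea is reasonable in spirit but, as written, is not a proof: you have not specified the perturbation or verified the Rayleigh quotient is strictly below $1$. The interlacing alternative you mention is also not carried out. Since the paper relies on the external reference for this implication, you would need to either consult Kirkland's original argument or complete one of your sketches before this direction can be considered proved.
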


\begin{theorem}\cite[Theorem 3.3]{das2004laplacian}\label{thm:das}
Let $X=(V,E)$ be a graph with $N_X(v_1)=N_X(v_2)=\cdots=N_X(v_k)=\{w_{1},w_{2},\ldots, w_{p}\}$, where $N_X(v)$ denotes the neighborhood of vertex $v$ in $X$. If $X^{+}$ is the graph obtained from $X$ by adding any $q$ $\left(1\leq q\leq \frac{k(k-1)}{2}\right)$ edges among $\{v_1,v_2,\ldots,v_k\}$, 
then the eigenvalues of $\L(X^{+})$ are as follows: Laplacian eigenvalues of the graph $X$ which are equal to $p$(k-1 in number) are incremented by $\lambda_i(X^+[G])$, $i=1,2,\ldots,k-1$ and,
the remaining eigenvalues are same, where $X^{+}[G]$ denotes the induced subgraph of $X^{+}$ with vertex set $G=\{v_1,v_2,\ldots,v_k\}$. 
\end{theorem}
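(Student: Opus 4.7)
The plan is to exploit the common‑neighbourhood structure of $v_1,\ldots,v_k$ and exhibit an invariant decomposition $\mathbb{R}^n = W\oplus W^\perp$ under both $\L(X)$ and $\L(X^+)$, where $W\subset\mathbb{R}^n$ is the $(k-1)$-dimensional subspace of vectors supported on $G=\{v_1,\ldots,v_k\}$ whose coordinates on $G$ sum to $0$. On the $W^\perp$ block the two Laplacians will agree, and on $W$ they will differ by a copy of $\L(X^+[G])$; the statement then falls out of this block decomposition.

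First, for any $v\in W$ I would compute $\L(X)v$ entry by entry using $N_X(v_i)=\{w_1,\ldots,w_p\}$: at $v_i$ one has $(\L(X)v)_{v_i}=p\,v_{v_i}$, since the only neighbours of $v_i$ are the $w_j$'s, where $v$ vanishes; at each $w_j$ one gets $-\sum_i v_{v_i}=0$; at any other vertex $u$ both $v_u$ and every neighbour of $u$ in the support of $v$ are absent. Thus $\L(X)v=pv$, so $W$ is a $(k-1)$-dimensional $p$-eigenspace of $\L(X)$. Next I would write $\L(X^+)=\L(X)+M$, where $M$ is supported on the $G\times G$ block and equals $\L(X^+[G])$ there. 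A parallel entrywise check gives: for $v\in W$, $Mv$ is supported on $G$ and equals $\L(X^+[G])\,v|_G$ extended by $0$; for $u\in W^\perp$ (i.e., $u$ constant on $G$), the diagonal $+d_{X^+[G]}(v_i)$ and off-diagonal $-d_{X^+[G]}(v_i)$ contributions of $M$ at each $v_i$ cancel. This yields the block relations
\[
\L(X^+)\big|_{W^\perp}=\L(X)\big|_{W^\perp},\qquad \L(X^+)\big|_{W}=p\,I_W+\L(X^+[G])\big|_{W|_G},
\]
so both $W$ and $W^\perp$ are $\L(X^+)$-invariant (and, by symmetry of $\L(X)$ plus invariance of $W$ under $\L(X)$, so is $W^\perp$ under $\L(X)$).

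Assembling the spectrum then requires only the standard fact that $\L(X^+[G])$ acts on the orthogonal complement of the all-ones vector of $\mathbb{R}^G$ with eigenvalues $\lambda_1(X^+[G]),\ldots,\lambda_{k-1}(X^+[G])$, i.e. all of its Laplacian eigenvalues except one zero. Combining the two blocks gives the claim: the $k-1$ copies of $p$ contributed by $W$ to $\sigma(\L(X))$ are replaced by $p+\lambda_i(X^+[G])$, and every other eigenvalue of $\L(X)$ survives in $\L(X^+)$. The mildly delicate point — and the place a sloppy argument could trip up — is that $p$ may appear in $\sigma(\L(X))$ with multiplicity strictly greater than $k-1$; any such extra copies live inside $W^\perp$ and must \emph{not} be incremented. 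Framing the argument through the orthogonal decomposition $\mathbb{R}^n=W\oplus W^\perp$ makes this bookkeeping automatic, so beyond a little care in that step there is no real obstacle — the whole proof is an invariant‑subspace computation driven by the twin hypothesis on $v_1,\ldots,v_k$.
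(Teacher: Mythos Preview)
Your argument is correct: the orthogonal decomposition $\mathbb{R}^n=W\oplus W^\perp$, with $W$ the zero-sum vectors supported on $G$, is invariant under both $\L(X)$ and the perturbation $M=\L(X^+)-\L(X)$ (which lives entirely on the $G\times G$ block and equals $\L(X^+[G])$ there); the verification that $\L(X)|_W=pI_W$, that $M|_{W^\perp}=0$, and that $M|_W$ is conjugate to $\L(X^+[G])$ acting on $\mathbf{1}^\perp\subset\mathbb{R}^G$ is exactly as you outline. Your remark about extra copies of $p$ living in $W^\perp$ and therefore not being incremented is the right way to handle the multiplicity bookkeeping.

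There is, however, nothing to compare your proof against in this paper: the theorem is quoted from \cite{das2004laplacian} (Theorem~3.3 there) and is not proved here; it is used only as a tool to deduce that $a(L(T(k,s,t)))=a(T(k-1,s,t))$. Your invariant-subspace argument is in fact the standard way such ``twin vertex'' perturbation results are established, and is essentially what Das does in the cited reference.
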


\begin{ex} The line graph of $T(1,s,t)$ has exactly one cut vertex which is adjacent to every vertex of $L(T(1,s,t)).$ In Figure \ref{fig:T(1,2,3)}, $v$ is the cut vertex of $L(T(1,3,2)).$
\begin{figure}[h]
\centering
\begin{tabular}{cc}
     \begin{tikzpicture}[scale=0.7]
    \vertex (-1) at (-1,0){};
    \vertex (1) at (0,0){};
    \vertex (3) at (-1,-1){};
    \vertex (4) at (-1,1){};
    \vertex (5) at (2,0) {};
    \vertex (6) at (2.5,-1){};
    \vertex (7) at (2.5,1){};
    \path[-]

    (1) edge (3)
    (-1) edge (1)
    (1) edge (4)
    (1) edge (5)
    (5) edge (6)
    (5) edge (7)
    ;
\end{tikzpicture}
&
\begin{tikzpicture}[scale=0.7]
    \vertex (1) at (0,0){};
    \vertex (2) at (1,0){$v$};
    \vertex (3) at (0.5,-1){};
    \vertex (4) at (0.5,1){};
    \vertex (6) at (2,-1){};
    \vertex (7) at (2,1){};
   
    \path[-]
    (1) edge (2)
    (1) edge (3)
    (1) edge (4)
    (2) edge (6)
    (2) edge (7)
    (2) edge (4)
    (2) edge (3)
    (3) edge (4)
    (6) edge (7)
    ;
\end{tikzpicture}
\\
\end{tabular}
\caption{$T(1,3,2),L(T(1,3,2))$}
\label{fig:T(1,2,3)}
\end{figure}
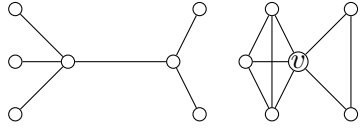
\end{ex}

The next lemma is immediate from Proposition \ref{thm:cutpoint}. 
\begin{lemma}\label{lem:linetree}
Let $X$ be a tree. Then $a(L(X))=1$ if and only if $X$ is a $T(1,s,t).$
\end{lemma}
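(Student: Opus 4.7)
The plan is to apply Proposition \ref{thm:cutpoint} to $L(X)$ in both directions.

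\emph{Sufficiency.} Suppose $X = T(1,s,t)$ with $s,t \ge 1$, and let $e_0$ denote the central edge $\{1,2\}$. As a vertex of $L(X)$, $e_0$ is adjacent to every other vertex, because every other edge of $X$ is a pendant edge at $1$ or at $2$ and therefore shares an endpoint with $e_0$. Moreover, deleting $e_0$ from $L(X)$ separates the clique $K_s$ formed by the pendant edges at $1$ from the clique $K_t$ formed by the pendant edges at $2$, so $e_0$ is a cut vertex. Proposition \ref{thm:cutpoint} then gives $a(L(X))=1$.

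\emph{Necessity.} Assume $a(L(X))=1$. If $X = K_{1,n-1}$ is a star, then $L(X)=K_{n-1}$ has algebraic connectivity $n-1$, which is never $1$ for a nontrivial tree. Hence $X$ has diameter at least $3$, and any interior edge of a longest path of $X$ is then a cut vertex of $L(X)$. Proposition \ref{thm:cutpoint} applies and produces a cut vertex $e=\{u,v\}$ of $L(X)$ that is adjacent to every other vertex of $L(X)$.

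The crux is translating this dominance condition back to $X$. Adjacency of $e$ to every other vertex of $L(X)$ says that every edge of $X$ distinct from $e$ is incident to $u$ or to $v$. For any vertex $w \notin \{u,v\}$, every edge at $w$ therefore goes to $u$ or to $v$; and $w$ cannot be adjacent to both, since that would create a triangle $u\,w\,v\,u$ in the tree $X$. Consequently every vertex outside $\{u,v\}$ is a pendant attached to exactly one of $u,v$. Letting $s$ and $t$ count these pendants at $u$ and at $v$, we obtain $X=T(1,s,t)$, with $s,t\ge 1$ forced by the requirement that $e$ be a cut vertex (otherwise $L(X)-e$ would remain connected as a single clique).

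The only mildly delicate step is the final structural translation from the dominating cut vertex of $L(X)$ to the double-star shape of $X$; once it is carried out, the tree property and the absence of cycles pin the structure down immediately.
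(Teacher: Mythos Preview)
Your argument is correct and is exactly the approach the paper has in mind: it states that the lemma ``is immediate from Proposition~\ref{thm:cutpoint}'' and gives no further details, so your write-up simply spells out that immediacy. Both directions hinge on Proposition~\ref{thm:cutpoint} applied to $L(X)$, together with the (routine) translation that a dominating cut vertex $e=\{u,v\}$ of $L(X)$ forces every edge of $X$ to meet $\{u,v\}$, whence the tree condition pins $X$ down as a double star $T(1,s,t)$ with $s,t\ge 1$.
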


Let $X$ be a $T(k,s,t)$ with $n$ vertices, where $k\neq1,n\geq 7$, then $a(L(X))<0.49.$  
 The line graph of $T(k,s,t)$ is a graph obtained from $T(k-1,s,t)$ by placing all possible edges among the pendant vertices attached at vertex $1$ and $k.$ We apply Theorem \ref{thm:das} to $X$ to find the Laplacian eigenvalues of $L(X).$ Let $V(X_1^\prime)$ and $V(X_2^\prime)$ contains vertices whose neighborhood is $1$ and $k,$ respectively. Consider $X^{+}=L(X)$, so $X^{+}[X_1^{\prime}]=K_s$ and $X^{+}[X_2^\prime]=K_t.$ The Laplacian eigenvalues of $K_b$ are 0 and $b$. By Theorem \ref{thm:das}, those eigenvalues of $X$ which are equal to $1$ ($s+t-2$ in number), $s-1$ and $t-1$ number of eigenvalues are incremented by $s$ and $t$, respectively. The rest of the eigenvalues are the same as $\L(X)$. This implies $a(L(X))=a(T(k-1,s,t))$ and, it is known that $a(T(k-1,s,t))<0.49.$ 
\par
\begin{theorem}\label{thm:T(1,s,t)}
 Let $X\in\S_m.$ Then $a(\be(X))=m-1$ if and only if $X$ is a $T(1,s,t)$, where $s,t\geq 2$.
\end{theorem}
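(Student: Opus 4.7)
The plan is to apply the spectral identity $a(\beta_m(X)) = \min\{(m-1)\,a(L(X)),\; q_{\min}^{m-1}(L(X))\}$ recorded in the introduction, so that $a(\beta_m(X)) = m-1$ is equivalent to the two simultaneous conditions $a(L(X)) \geq 1$ and $q_{\min}^{m-1}(L(X)) \geq m-1$.

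For the forward direction I would argue that since $L(X)$ is the line graph of a tree it always has a cut vertex, so Fiedler's bound gives $a(L(X)) \leq \kappa(L(X)) = 1$; combined with $a(L(X)) \geq 1$ this forces $a(L(X)) = 1$, and Lemma \ref{lem:linetree} identifies $X$ as $T(1,s,t)$ for some $s,t \geq 1$. To exclude $s = 1$ (and symmetrically $t = 1$), I would write down $Q_{m-1}(L(T(1,1,t)))$ on the equitable partition $(\{e_1\},\{e\},\{f_1,\ldots,f_t\})$: the diagonal entry of the resulting $3 \times 3$ symmetric quotient corresponding to the pendant class $\{e_1\}$ is exactly $m-1$, so after shifting by $(m-1)I$ this diagonal entry becomes $0$ while the off-diagonal entry joining $e_1$ to $e$ is still $1$, and a test vector of the form $(1,-\varepsilon,0)^T$ produces a strictly negative quadratic-form value for small $\varepsilon>0$; hence $q_{\min}^{m-1}(L(T(1,1,t))) < m-1$, so $a(\beta_m(T(1,1,t))) < m-1$.

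For the backward direction, suppose $X = T(1,s,t)$ with $s,t \geq 2$. Lemma \ref{lem:linetree} already yields $a(L(X)) = 1$, so only $q_{\min}^{m-1}(L(X)) \geq m-1$ remains. The graph $L(T(1,s,t))$ is two cliques $K_{s+1}$ and $K_{t+1}$ sharing the apex vertex $e = \{1,2\}$, and the equitable partition $V_1 = \{e_i\}_{i=1}^s$, $V_2 = \{e\}$, $V_3 = \{f_j\}_{j=1}^t$ decomposes the spectrum of $Q_{m-1}(L(X))$ into the eigenvalue $(m-1)s-1$ with multiplicity $s-1$, the eigenvalue $(m-1)t-1$ with multiplicity $t-1$, and the three eigenvalues of the symmetric $3 \times 3$ quotient
\[
M = \begin{pmatrix} (m-1)s+s-1 & \sqrt{s} & 0 \\ \sqrt{s} & (m-1)(s+t) & \sqrt{t} \\ 0 & \sqrt{t} & (m-1)t+t-1 \end{pmatrix}
\]
expressed in the orthonormal basis aligned with the three classes. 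For $s,t \geq 2$ and $m \geq 2$ the first two eigenvalue families trivially satisfy the required bound. For the $3\times 3$ block, the shifted matrix $M - (m-1)I$ has diagonal $(m(s-1),(m-1)(s+t-1),m(t-1))$, all strictly positive, and I would verify positive semidefiniteness via Sylvester's criterion on the three leading principal minors. The $1 \times 1$ and $2 \times 2$ checks are immediate, and the $3 \times 3$ determinant reduces to the inequality $m(m-1)(s-1)(t-1)(s+t-1) \geq 2st - s - t$, which I would deduce from $m(m-1) \geq 2$ together with the elementary estimate $2(s-1)(t-1)(s+t-1) \geq 2st - s - t$ (equivalent after a short rearrangement to $2(s-1)(t-1) \geq 1$), valid for all $s,t \geq 2$.

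The main obstacle is the last polynomial inequality; none of the other steps requires more than the cut-vertex characterization from Lemma \ref{lem:linetree} and the standard equitable-partition decomposition of the $Q_{m-1}$ spectrum, but one must handle the $3 \times 3$ determinant cleanly enough to conclude $M - (m-1)I \succeq 0$ across the entire parameter range $s,t \geq 2,\,m \geq 2$.
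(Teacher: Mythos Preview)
Your argument is correct, and both directions are handled soundly. One small caveat: Sylvester's criterion on \emph{leading} principal minors characterises positive \emph{definiteness}, not semidefiniteness; but since your computation in fact shows all three leading minors are strictly positive (the final inequality $2(s-1)(t-1)\ge 1$ is strict for $s,t\ge 2$), $M-(m-1)I$ is positive definite and the issue is purely terminological.

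Your route differs from the paper's in two notable ways. In the forward direction, the paper splits the exclusion of $s=1$ into two cases: for $s=t=1$ it computes the full characteristic polynomial of $Q_{m-1}(P_3)$, and for $s=1,\ t>1$ it applies Cauchy interlacing to the $2\times 2$ principal submatrix on the pendant edge and the central edge. Your test-vector argument on the shifted quotient handles both cases at once and is cleaner. In the backward direction the difference is more substantial: the paper first treats $m=2$ by computing every eigenvalue of $Q_1(L(T(1,s,t)))$ explicitly (finding $s-1$, $t-1$, $s+t-1$, and the two roots of a quadratic, then checking each is $\ge 1$), and only afterwards passes to $m\ge 3$ via the Weyl-type bound $q_{\min}^{m-1}\ge q_{\min}^1+(m-2)\delta(L(X))\ge 1+2(m-2)=2m-3\ge m-1$. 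Your approach stays uniform in $m$ by reducing everything to the single polynomial inequality $m(m-1)(s-1)(t-1)(s+t-1)\ge 2st-s-t$ on the $3\times 3$ equitable quotient. The paper's two-step method avoids having to control a $3\times 3$ determinant directly, at the cost of an extra eigenvalue-perturbation lemma and a separate base case; your method is more self-contained and treats all $m\ge 2$ simultaneously.
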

\begin{proof}
Suppose that $a(\be(X))=m-1$. We know $$a(\be(X))=min\{(m-1)a(L(X)),q_{min}^{m-1}(L(X))\}.$$ Since $a(L(X))\leq 1$, this implies that $a(L(X))=1$ and $q_{min}^{m-1}(L(X))\geq m-1.$ By Lemma \ref{lem:linetree}, $X$ is a $T(1,s,t).$ To prove $s,t\geq 2$, it is sufficient to prove that if $s=1$ and $t\geq 1$, then $q_{min}^{m-1}L(X)<m-1.$
\par
$Case1:$ Let $s=t=1.$ The characterstic polynomial for $Q_{m-1}(P_3)$ is $\phi_\lambda=\lambda^3-4(m-1)\lambda^2+(5(m-1)^2-2)\lambda-2m(m-1)(m-2)=0.$ Its roots are $m-1,\frac{3(m-1)\pm\sqrt{m^2-2m+9}}{2}.$ It can be easily observed that, $\frac{3(m-1)-\sqrt{(m-1)^2+8}}{2}<m-1.$
\par
$Case2:$ Let $s=1$ and $t>1$, then we show that $q_{min}^{m-1}(L(X))<m-1.$ To prove this we choose the principal submatrix $\begin{bmatrix}
(m-1)(t+1) & 1\\
1 & (m-1)
 \end{bmatrix}$ of 
 \begin{equation*}\label{eqn:matrixQ}
Q_{m-1}=\begin{bmatrix}
 (m-1)(1+t) & 1 & \bold{1_{1\times t}}\\
 1 & m-1 & \bold{0_{1\times (n-3)}}\\
 \bold{1_{t\times1}} & \bold{0_{(n-3)\times1}} & \J_t+((m-1)t-1)\I_{t}
 \end{bmatrix},
 \end{equation*}
 where $\bold{1_{m\times n}}$ denotes the $m\times n$ matrix with all entries equal to $1.$
 Its eigenvalues are $$\frac{(m-1)(t+2)\pm \sqrt{t^2(m-1)^2+4}}{2}.$$ By Cauchy's interlacing theorem, $q_{min}^{m-1}(L(X))\leq \frac{(m-1)(t+2)- \sqrt{t^2(m-1)^2+4}}{2}
<\frac{(t+2)(m-1)-t(m-1)}{2}=m-1.$ 
\par
Conversely, suppose that $X$ is a $T(1,s,t)$, where $s,t\geq 2.$ We first show $a(\beta_2(X))=1.$ The eigenvalues of $\L(\beta_2(X))$ is given by the union of spectra of $\L(L(X))$ and $Q_1(L(X)).$ By Lemma \ref{lem:linetree}, $a(L(X))=1$. The matrix $Q_1$ has the following form
\begin{equation*}\label{eqn:structureQ}
 Q_1=\begin{bmatrix}
 s+t & \bold{1_{1\times s}} & \bold{1_{1\times t}}\\
 \bold{1_{s\times1}} & U_1 & \bold{0_{s\times (n-s-2)}}\\
 \bold{1_{t\times1}} & \bold{0_{t\times (n-t-2)}} & V_1
 \end{bmatrix},\end{equation*}
where $U_1=\J_s+(s-1)\I_s$ and $V_1=\J_t+(t-1)\I_t.$ Let $v_i,$ where $1\leq i\leq s-1$ be an eigenvector corresponding to eigenvalue $s-1$ of $U_1.$ A simple check shows that $Q_1V_i=(s-1)V_i$, where $V_i =\begin{pmatrix} 0\\  v_i\\ \bold0_{t\times 1}
\end{pmatrix}.$ Similarly, it can be seen that $t-1$ is an eigenvalue of $Q_1$ with multiplicity $t-1.$ Let $s\geq t$, for $v=(t-s,\underbrace{1,\ldots,1}_{s},\underbrace{-1,\ldots,-1}_{t})$, note that $Q_1v=(s+t-1)v$. Suppose that $\lambda_t$ and $\lambda_t^\prime$ are the remaining eigenvalues. Thus we have Equation \ref{eqn:trQ} 
\begin{equation}\label{eqn:trQ}
    \lambda_t+\lambda_t^\prime=2(s+t)-1.
\end{equation}
Since $Tr(Q_1^2)=\sum_{i=1}^{n}\lambda_i^2$, we obtain Equation \ref{eqn:lambda12}
\begin{equation}\label{eqn:lambda12}
    \lambda_t\lambda_t^\prime=-2(s+t)+4st.
\end{equation}
From Equation \ref{eqn:trQ} and \ref{eqn:lambda12}, we have
$\lambda_t^2-(2(s+t)-1)\lambda_t+(-2s-2t+4st)=0.$
Hence, $\lambda_t=(s+t)-\frac{1}{2}\pm \frac{\sqrt{4(t-s)^2+4(s+t)+1}}{2}.$ 
For $s,t\geq 2$, all the eigenvalues of $Q_1$ are greater than or equal to $1.$ Therefore, $a(\beta_2(X))=1$. 
\par 
 Now we prove for $m\geq3.$ By Theorem 4.3.1 of \cite{horn2012matrix},
 $$q_{min}^1L(X)+(m-2)\delta(L(X))\leq q_{min}^{m-1}(L(X)),$$ where $\delta(X)$ denotes the minimum of vertex degrees of $X.$ We have $q_{min}^1L(X)\geq 1.$ Since $\delta(L(X))\geq 2$, then $2m-3\leq q_{min}^{m-1}(L(X)).$ Thus, $q_{min}^{m-1}(L(X))\geq m-1.$ The proof is complete.
\end{proof}

Theorem \ref{thm:T(1,s,t)} will help us to find graph $X$ for which $\be(X)$ is Laplacian integral. Recall, a graph is {\em Laplacian integral} if the spectrum of its Laplacian matrix consists entirely of integers. A lot of work has been done on investigating the class of Laplacian integral graphs (see \cite{fallat2005graphs,kirkland2005completion,liu2010some,merris1994degree}). The spectrum of $\be(X)$ consists of eigenvalues of $(m-1)\L(L(X)).$ We know $0<a(L(X))\leq 1,$ therefore by Theorem \ref{thm:T(1,s,t)}, $X$ is a $T(1,s,t)$, where $s,t\geq 2.$ Theorem \ref{thm:T(1,s,t)} gives a necessary condition for $\be(X)$ to be Laplacian integral. Also,
the proof of converse part of Theorem \ref{thm:T(1,s,t)} shows that, let $X\in \S_m$, then $\beta_2(X)$ is Laplacian integral if and only if $X$ is a $T(1,s,t)$ tree, where $s,t\geq 2$ and $4(s-t)^2+4(s+t)+1$ is a perfect square. In Corollary \ref{cor:integral2}, we generalise it for $m.$
\par
\begin{cor}\label{cor:integral2}
Let $X\in \S_m.$ Then $\be(X)$ is Laplacian integral if and only if $X$ is a $T(1,s,t)$, where $s,t\geq2$ and
$\lambda^3-a\lambda^2+b\lambda-c=0$ has integral roots,
where $$a=(s+t)(2m-1)-2$$
$$b=m(m-1)s^2+m(m-1)t^2-(3m-1)s-(3m-1)t+m(3m-2)st+1$$
$$c=m(-m+1)s^2+m(-m+1)t^2+ms+mt-2m^2st+m^2(m-1)s^2t+m^2(m-1)st^2.$$
\end{cor}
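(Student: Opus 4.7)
The plan is to combine the spectrum decomposition of $\L(\be(X))$ given in the introduction with the block computation already performed in the sufficiency direction of Theorem \ref{thm:T(1,s,t)}. Recall that this spectrum is the union of $\mathrm{Spec}((m-1)\L(L(X)))$ with multiplicity one, and $\mathrm{Spec}(Q_{m-1}(L(X)))$ with multiplicity $m-1$; hence $\be(X)$ is Laplacian integral if and only if both spectra consist of integers.

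For the necessity direction, I plan to follow the argument already sketched in the paragraph preceding the corollary. Since $(m-1)\L(L(X))$ is an integer matrix, any rational eigenvalue must itself be an integer, so integrality forces $(m-1)a(L(X))$ to be a positive integer; combined with $0 < a(L(X)) \leq 1$ and the algebraic integer argument, this pins $a(L(X)) = 1$. Lemma \ref{lem:linetree} then gives $X = T(1, s, t)$, and the range $s, t \geq 2$ is inherited from the hypothesis $X \in \S_m$ together with the invocation of Theorem \ref{thm:T(1,s,t)} described in that paragraph. The remaining work is to compute $\mathrm{Spec}(Q_{m-1}(L(T(1, s, t))))$ explicitly and to recognise that it fails to be integral precisely when the cubic $\lambda^3 - a\lambda^2 + b\lambda - c$ fails to have integer roots.

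For that explicit computation I would exploit the symmetry of $L(T(1, s, t))$: a central cut vertex $e_0$ joined to two cliques $K_s$ and $K_t$. Both $\L(L(X))$ and $Q_{m-1}(L(X))$ preserve the three invariant subspaces (i) vectors supported on the $s$-clique and orthogonal to $\mathbf{1}_s$, (ii) the $t$-clique analogue, and (iii) the three-dimensional subspace spanned by $e_0$, $\mathbf{1}_s$, $\mathbf{1}_t$. On (i) and (ii), $Q_{m-1}$ contributes the automatically integer eigenvalues $(m-1)s - 1$ (multiplicity $s-1$) and $(m-1)t - 1$ (multiplicity $t-1$), while $\L$ contributes $s+1$ and $t+1$ with the same multiplicities. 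On subspace (iii), the matrix of $Q_{m-1}$ in the basis $(e_0, \mathbf{1}_s, \mathbf{1}_t)$ is
\[
M = \begin{bmatrix} (m-1)(s+t) & s & t \\ 1 & ms - 1 & 0 \\ 1 & 0 & mt - 1 \end{bmatrix},
\]
and I would read off $\det(\lambda I - M)$ by extracting the trace, the sum of $2 \times 2$ principal minors, and the determinant; a careful expansion should reproduce exactly the $a$, $b$, $c$ displayed in the statement. In parallel, the restriction of $\L$ to (iii) has characteristic polynomial $\lambda(\lambda - 1)(\lambda - (s+t+1))$, so scaling by $m-1$ contributes the three integers $0$, $m-1$, and $(m-1)(s+t+1)$.

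Putting these pieces together, every eigenvalue of $\L(\be(X))$ other than the three roots of $\lambda^3 - a\lambda^2 + b\lambda - c = 0$ is automatically an integer; and those three roots, being eigenvalues of the integer matrix $Q_{m-1}(L(X))$, are already algebraic integers, so rationality and integrality coincide for them. This yields the stated equivalence. The one genuinely non-trivial obstacle is the algebraic verification that the characteristic polynomial of $M$ matches the displayed $a$, $b$, $c$ on the nose; this reduces to a routine but lengthy $3 \times 3$ expansion.
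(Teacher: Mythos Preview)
Your proposal is correct and follows the paper's overall architecture: split the spectrum of $\L(\be(X))$ into the $(m-1)\L(L(X))$ part and the $Q_{m-1}(L(X))$ part, use the former to force $a(L(X))=1$ and hence $X=T(1,s,t)$ via Lemma~\ref{lem:linetree} and Theorem~\ref{thm:T(1,s,t)}, and then verify that every eigenvalue is automatically an integer except for three residual eigenvalues of $Q_{m-1}$ that satisfy the displayed cubic. Where you genuinely diverge is in how you extract that cubic. The paper identifies the obvious eigenvalues $(m-1)s-1$ and $(m-1)t-1$ and then recovers $a,b,c$ via Newton's identities, computing $\mathrm{Tr}(Q)$, $\mathrm{Tr}(Q^2)$, $\mathrm{Tr}(Q^3)$ and subtracting the known contributions (Equations~\eqref{eqn:cube1}--\eqref{eqn:cube3}). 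You instead restrict $Q_{m-1}$ to the three-dimensional invariant subspace spanned by $e_0,\mathbf{1}_s,\mathbf{1}_t$, obtain the explicit $3\times 3$ matrix $M$, and read off the characteristic polynomial from its principal minors. Your route is shorter and more transparent---one $3\times 3$ expansion in place of three trace computations---and it makes the parallel Laplacian calculation immediate (incidentally, your eigenvalue $s+t+1$ on subspace~(iii) is correct; the paper's $s+t-1$ is a typo, as a trace check confirms). The paper's trace method has the mild advantage of not needing the invariant subspace to be exhibited first, but otherwise your approach is the cleaner of the two.
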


\begin{proof}
We first give the spectrum of $\L(L(X)),$ where $X=T(1,s,t).$
The Laplacian matrix of $L(X)$ can be expressed in the following form
\begin{equation*}\label{eqn:structureL}
 \begin{bmatrix}
 s+t & \bold{-1_{1\times s}} & \bold{-1_{1\times t}}\\
 \bold{-1_{s\times1}} & U & \bold{0_{s\times (n-s-2)}}\\
 \bold{-1_{t\times1}} & \bold{0_{t\times (n-t-2)}} & V
 \end{bmatrix},\end{equation*}
 where $U=-\J_s+(s+1)\I_s$, $V=-\J_t+(t+1)\I_t.$
\par
It is easy to see that $1,s+1,t+1$ and $s+t-1$ are eigenvalues of $\L(L(X))$ with multiplicity $1,s-1,t-1$ and 1, respectively. The remaining eigenvalues are the eigenvalues of the following matrix \begin{equation*}\label{eqn:matrixQ2}
Q=Q_{m-1}(L(X))=\begin{bmatrix}
 (m-1)(s+t) & \bold{1_{1\times s}} & \bold{1_{1\times t}}\\
 \bold{1_{s\times 1}} & \J_s+((m-1)s-1)\I_{s}  & \bold{0_{s\times (n-s-2)}}\\
 \bold{1_{t\times1}} & \bold{0_{t\times (n-t-2)}} & \J_t+((m-1)t-1)\I_{t}
 \end{bmatrix}.
 \end{equation*}
 It can be easily seen that $(m-1)s-1$ and $(m-1)t-1$ are eigenvalues of $Q_{m-1}(L(X))$ with multiplicity $s-1$ and $t-1$, respectively. Let $\x,\y$ and $\z$ be the remaining three eigenvalues of $Q_{m-1}$. As the trace of a matrix is equal to the sum of its eigenvalues, we have Equation \ref{eqn:cube1}
 \begin{equation}\label{eqn:cube1}
     \x+\y+\z=(2m-1)(s+t)-2.
 \end{equation}
 Since $Tr(Q^2)=Tr(A^2)+(m-1)^2Tr(\D^2)=s(s+1)+t(t+1)+(m-1)^2((s+t)^2+s^3+t^3)$ and $Tr(Q^2)=\sum_{i=1}^{3}\lambda_i^{2}+((m-1)s-1)^2(s-1)+((m-1)t-1)^2(t-1)$, we get Equation \ref{eqn:cubesq} 
 \begin{equation}\label{eqn:cubesq}
     \sum_{i=1}^{3}\lambda_i^{2}=(1+2(m-1)^2+2(m-1))s^2+(1+2(m-1)^2+2(m-1))t^2-2(m-1)s-2(m-1)t+2(m-1)^2st+2.
 \end{equation}
Now using Equation \ref{eqn:cubesq} and the algebraic identity of $(\x+\y+\z)^2$, we deduce Equation \ref{eqn:cube2}
 \begin{equation}\label{eqn:cube2}
     \x\y+\y\z+\z\x=m(m-1)s^2+m(m-1)t^2-(3m-1)s-(3m-1)t+m(3m-2)st+1.
 \end{equation}
 
A simple computation reveals that $Tr(Q^3)=Tr(A^3)+(m-1)^3Tr(\D^3)+3(m-1)Tr(\D^2)$.
Furthermore, $Tr(Q^3)$ is equal to $\sum_{i=1}^3\lambda_i^3 + ((m-1)s-1))^3(s-1)+((m-1)t-1))^3(t-1)$. Substitute Equation \ref{eqn:cube1} and \ref{eqn:cube2} into the algebraic identity $\sum_{i=1}^3\lambda_i^3-3\x\y\z=(\x+\y+\z)((\x+\y+\z)^2-3(\x\y+\y\z+\z\x))$ and obtain Equation \ref{eqn:cube3}
 \begin{equation}\label{eqn:cube3}
     \x\y\z=m(-m+1)s^2+m(-m+1)t^2+ms+mt-2m^2st+m^2(m-1)s^2t+m^2(m-1)st^2.
 \end{equation}
 
 From Equation \ref{eqn:cube1}, \ref{eqn:cube2} and \ref{eqn:cube3}, the proof is complete.
\end{proof}
\par
In the next result, all restricted graphs $X$ are provided that have $a(X\times K_m)=m-1.$ Before that, we state a couple of results and recall some terminologies from Bapat's paper \cite{bapat2012algebraic} as it will be required to prove Theorem \ref{thm:restricted}. A {\em restricted graph} is a graph with the restriction that each block can have at most two cut vertices. For example, $L(T(k,s,t))$ is a restricted graph. A {\em block graph} is a graph in which each block is complete. Let $X$ be a connected restricted graph with blocks $B_1, B_2,\ldots, B_k.$ Replace the block $B_i$ by a path $P_i=[u_i,v_i]$ on two vertices such that $B_i, B_j$ have a common vertex if and only if $P_i, P_j$ have a common vertex. The resulting graph is a tree that we call the {\em block structure} of $X.$
\par
\begin{definition}\cite{de2011smallest}
Fix $\delta\geq 2$ and consider the minimal family of graphs $P(\delta)$ satisfying the following properties:\\
(a) if $r$ is an integer and $r>\delta$, then $K_r\in P(\delta)$;\\
(b) if $H\in P(\delta)$, $r$ and $k$ are integers, $r>\delta$ and $r-2\geq k\geq 0$, then every $k$-sum of $H$ and $K_r$ belongs to $P(\delta)$. Recall, let $X_1$ and $X_2$ be vertex disjoint graphs, and let $X_1^\prime$ be a $k$-clique of $X_1$ and $X_2^\prime$ be a $k$-clique of $X_2$. A graph that is obtained by identifying $X_1$ and $X_2$ is called a $k$-sum of $X_1$ and $X_2.$
  
\end{definition}
\begin{theorem} \cite[Theorem 2.6]{de2011smallest}\label{thm:smallQ}
Suppose that $\delta\geq 2$ and $X$ is a graph that has a spanning subgraph $H\in P(\delta).$ Also, suppose that X has two vertices of degree $\delta$ with the same closed
neighborhoods. Then $q_{min}^{1}(X)=\delta-1.$
\end{theorem}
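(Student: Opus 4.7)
The plan is to establish $q_{\min}^1(X) = \delta - 1$ by proving the two inequalities separately: the upper bound $q_{\min}^1(X) \leq \delta - 1$ from the twin-vertex hypothesis via an explicit eigenvector, and the lower bound $q_{\min}^1(X) \geq \delta - 1$ from the spanning subgraph $H \in P(\delta)$ by combining monotonicity of $Q_1$ under edge addition with a structural induction on $P(\delta)$.

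For the upper bound, let $u, v$ be the two vertices of degree $\delta$ with $N[u] = N[v]$. Since $u \in N[u] = N[v]$, we have $u \sim v$, and $W := N(u)\setminus\{v\} = N(v)\setminus\{u\}$ has size $\delta - 1$. I would take $x = e_u - e_v$ and verify directly that $Q_1(X)\,x = (\delta - 1)\,x$: at $u$, $(Q_1 x)_u = \sum_{y \sim u} x_y + \delta \cdot 1 = -1 + 0 + \delta$; at $v$, by symmetry, $-(\delta - 1)$; at any other vertex $w$, the common-closed-neighbourhood property forces $w \sim u \iff w \sim v$, so the contributions from $x_u = 1$ and $x_v = -1$ either both appear and cancel, or both are absent. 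Hence $\delta - 1$ is an eigenvalue of $Q_1(X)$.

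For the lower bound, the key observation is that for any supergraph $X$ of $H$,
\[
Q_1(X) - Q_1(H) \;=\; \sum_{uv \in E(X) \setminus E(H)} (e_u + e_v)(e_u + e_v)^T \;\succeq\; 0,
\]
so by Weyl's monotonicity $q_{\min}^1(X) \geq q_{\min}^1(H)$. It thus suffices to prove the claim $q_{\min}^1(H') \geq \delta - 1$ for every $H' \in P(\delta)$ by structural induction on the defining construction. The base case $H' = K_r$ with $r > \delta$ is immediate, since the spectrum of $Q_1(K_r)$ is $\{2(r-1),\, r-2,\ldots,r-2\}$ and $r - 2 \geq \delta - 1$. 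In the inductive step, $H'$ is a $k$-sum of some $H_1 \in P(\delta)$ (satisfying the claim) and $K_r$ along a common $k$-clique $W$, with $r > \delta$ and $k \leq r - 2$. Using the Rayleigh-quotient identity
\[
x^T Q_1(H')\,x \;=\; \sum_{uv \in E(H')} (x_u + x_v)^2,
\]
I would split the right-hand side into edges of $H_1$ and edges of $K_r$ with each edge of the shared clique $W$ counted exactly once, and combine the inductive bound on $H_1$ with $q_{\min}^1(K_r) = r - 2 \geq \delta - 1$. A convenient simplification is first to average the values of $x$ over the $r - k \geq 2$ vertices of $K_r$ lying outside $W$ (they form a set of twins inside $K_r$); this replacement does not increase the quadratic form and reduces the analysis to a smaller configuration.

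The main obstacle is precisely this inductive step: the $\binom{k}{2}$ edges of the shared clique $W$ must be accounted for exactly once, and the interaction between the two blocks across $W$ requires careful bookkeeping. The hypotheses $r > \delta$ and $k \leq r - 2$ are used here to guarantee that the $K_r$-side contributes at least two ``free'' vertices outside $W$, which is what lets the $K_r$-Rayleigh-quotient estimate absorb any potential deficit on $W$ inherited from the $H_1$-side; without either bound the induction breaks down. Once the inductive claim is established, combining it with the edge-monotonicity inequality and the upper-bound eigenvector completes the proof.
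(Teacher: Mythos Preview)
This theorem is quoted from \cite{de2011smallest} and is not proved in the present paper; it is used only as a black box in the proof of Theorem~\ref{thm:restricted}. So there is no ``paper's own proof'' to compare against, and your proposal must be judged on its own merits.

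Your upper bound is clean and correct: $e_u-e_v$ is indeed a $Q_1$-eigenvector with eigenvalue $\delta-1$ whenever $u,v$ are closed twins of degree $\delta$. Likewise the monotonicity step $q_{\min}^1(X)\ge q_{\min}^1(H)$ via $Q_1(X)-Q_1(H)=\sum_{uv\in E(X)\setminus E(H)}(e_u+e_v)(e_u+e_v)^T\succeq 0$ and the base case $q_{\min}^1(K_r)=r-2\ge\delta-1$ are both fine.

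The genuine gap is the inductive step, which you yourself flag as ``the main obstacle'' but do not actually close. The natural decomposition
\[
x^TQ_1(H')x=x_{V_1}^TQ_1(H_1)x_{V_1}+x_{V_2}^TQ_1(K_r)x_{V_2}-x_W^TQ_1(K_k)x_W
\]
fails term-by-term: bounding the first two terms from below by the induction hypothesis and by $r-2$, and the third from above by $2(k-1)\|x_W\|^2$, leads to the requirement $(r-2k)\|x_W\|^2+(r-1-\delta)\|x_{V_2\setminus W}\|^2\ge 0$, and $r-2k$ can be very negative (e.g.\ $\delta=10$, $r=11$, $k=9$). Your alternative split, keeping the $W$-edges with $H_1$ and asking only that the $K_r$-minus-$W$ contribution cover $(\delta-1)\|x_{V_2\setminus W}\|^2$, also breaks: after your (valid) averaging reduction one needs $\sum_{i=1}^k(w_i+c)^2\ge(\delta+1-2(r-k))c^2$, and setting every $w_i=-c$ gives $0$ on the left while the right is positive in the same example. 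The point is that the $H_1$-side need not have any slack on $W$ (the inductive bound can be tight there), so the deficit cannot simply be ``absorbed'' as you suggest. The argument in \cite{de2011smallest} handles the overlap more delicately; as written, your sketch does not constitute a proof of the lower bound.
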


\begin{theorem}\label{thm:restricted}
Let $X$ be a connected restricted graph with blocks $K_{s_1},\ldots,K_{s_k}$, where $k\geq3$ and $X\times K_m$ is connected. Then $a(X\times K_m)=m-1$ if and only if $\delta(X)\geq 2$ and the block structure of $X$ is a star graph.
\end{theorem}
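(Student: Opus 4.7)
My approach rests on the decomposition $a(X\times K_m)=\min\{(m-1)a(X),\,q_{\min}^{m-1}(X)\}$ from the introduction, so I need to characterize when both quantities reach $m-1$, and I plan to handle the two factors separately using Proposition \ref{thm:cutpoint} for $a(X)$ and a structural analysis of $Q_{m-1}(X)$ for the signless part.

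For necessity, suppose $a(X\times K_m)=m-1$. The $k\geq 3$ complete blocks force a cut vertex in $X$, hence $a(X)\leq\kappa(X)\leq 1$; combined with $(m-1)a(X)\geq m-1$ this pins $a(X)$ at $1$, and Proposition \ref{thm:cutpoint} then supplies a cut vertex $v$ adjacent to every vertex. Because blocks are cliques, any block missing $v$ would contain a vertex outside $N(v)$, so every block contains $v$ and the block structure is a star. To exclude $\delta(X)=1$, I would assume a pendant $u$ exists (so $\{u,v\}$ is a $K_2$ block) and exhibit the $2\times 2$ principal submatrix of $Q_{m-1}(X)$ indexed by $\{u,v\}$:
\[
\begin{bmatrix} m-1 & 1\\ 1 & (m-1)d(v)\end{bmatrix},
\]
whose smaller eigenvalue is strictly below $m-1$ by a direct discriminant computation. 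Cauchy interlacing then gives $q_{\min}^{m-1}(X)<m-1$, contradicting $a(X\times K_m)=m-1$.

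For sufficiency, assume the block structure is a star with center $v$ and $\delta(X)\geq 2$. Proposition \ref{thm:cutpoint} immediately gives $a(X)=1$, so $(m-1)a(X)=m-1$. Setting $n_i:=s_i-1\geq 2$ and $V_i:=V(K_{s_i})\setminus\{v\}$, the subspaces of vectors supported on $V_i$ and orthogonal to $\mathbf{1}_{V_i}$ are $Q_{m-1}$-invariant with eigenvalue $(m-1)n_i-1\geq 2m-3\geq m-1$, accounting for $\sum_i(n_i-1)$ eigenvalues. The orthogonal complement, spanned by $e_v$ together with $\mathbf{1}_{V_i}/\sqrt{n_i}$ for $1\leq i\leq k$, carries the reduced $(k+1)\times(k+1)$ symmetric matrix
\[
M=\begin{bmatrix} (m-1)\sum_i n_i & \sqrt{n_1} & \cdots & \sqrt{n_k}\\
\sqrt{n_1} & mn_1-1 & & \\
\vdots & & \ddots & \\
\sqrt{n_k} & & & mn_k-1 \end{bmatrix},
\]
supplying the remaining $k+1$ eigenvalues of $Q_{m-1}(X)$.

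The main obstacle is proving $M\succeq(m-1)I$. I would take the Schur complement of $M-(m-1)I$ relative to its positive definite lower-right block $\operatorname{diag}(m(n_1-1),\ldots,m(n_k-1))$, which reduces the claim to the scalar inequality
\[
(m-1)\Bigl(\sum_{i=1}^k n_i - 1\Bigr)\;\geq\;\sum_{i=1}^k \frac{n_i}{m(n_i-1)}.
\]
The right-hand side is at most $2k/m$ because $n_i/(n_i-1)\leq 2$ for $n_i\geq 2$, while the left-hand side is at least $(m-1)(2k-1)$ because $\sum_i n_i\geq 2k$; for $m\geq 2$ and $k\geq 3$ the resulting bound $m(m-1)(2k-1)\geq 2k$ is easily verified, so every eigenvalue of $M$ is at least $m-1$ and hence $q_{\min}^{m-1}(X)\geq m-1$, completing the proof.
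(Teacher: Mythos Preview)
Your proof is correct, and the necessity half is essentially identical to the paper's: both pin down $a(X)=1$ via Proposition~\ref{thm:cutpoint} and then exclude pendant vertices by interlacing against the same $2\times 2$ principal submatrix of $Q_{m-1}(X)$.

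The sufficiency argument, however, takes a genuinely different route. The paper does not compute any eigenvalues of $Q_{m-1}(X)$ directly; instead it invokes Theorem~\ref{thm:smallQ} (the de~Lima--Nikiforov result) to conclude at once that $q_{\min}^{1}(X)=\delta(X)-1$, and then applies Weyl's inequality (Theorem~4.3.1 of Horn--Johnson) in the form $q_{\min}^{1}(X)+(m-2)\delta(X)\leq q_{\min}^{m-1}(X)$ to lift this to $q_{\min}^{m-1}(X)\geq (m-1)\delta(X)-1\geq m-1$. Your approach bypasses both citations: you exploit the explicit windmill-type structure to split $Q_{m-1}(X)$ into the obvious invariant pieces, read off the eigenvalues $(m-1)n_i-1$ on each block, and then control the residual $(k+1)\times(k+1)$ quotient matrix $M$ by a Schur complement estimate. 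The paper's argument is shorter and yields the sharper conclusion $q_{\min}^{m-1}(X)\geq (m-1)\delta(X)-1$ with no extra work, but at the cost of importing a nontrivial structural theorem about $P(\delta)$-graphs; your argument is fully self-contained and exhibits the spectrum much more explicitly, which is the sort of information one would want anyway for Laplacian-integrality questions in the spirit of Corollary~\ref{cor:integral2}.

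One small remark on exposition: your sentence ``any block missing $v$ would contain a vertex outside $N(v)$'' is not literally the reason the block structure is a star---the correct mechanism is that if $v\notin B$ yet every vertex of $B$ is adjacent to $v$, then $B\cup\{v\}$ is $2$-connected, contradicting maximality of $B$. The conclusion (every block contains $v$, hence the block structure is a star) is the same, so this does not affect correctness.
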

\begin{proof}
If $a(X\times K_m)= m-1$, then $a(X)=1$ and $q_{min}^{m-1}(X)\geq(m-1).$ By Proposition \ref{thm:cutpoint}, the block structure of $X$ is a star graph. Note that $X$ has exactly one cut vertex. Suppose that there exists a pendant vertex (say $v_n$), and $v_1$ is the cut vertex of $X$. The degree of $v_1$ is $x=s_1+s_2+\cdots+s_{k-1}-k+2.$ That is, $$\begin{bmatrix}
(m-1)x & 1\\
1 & (m-1)
\end{bmatrix}$$ is a principal submatrix of $Q_{m-1}.$ Its eigenvalues are $\frac{(m-1)(x+1)\pm\sqrt{(m-1)^2(x-1)^2+4}}{2}.$ By Cauchy's interlacing theorem, $q_{min}^{m-1}(X)<m-1$, which is a contradiction.
\par
 Conversely, suppose that $\delta(X)\geq2$ and the block structure of $X$ is a star graph. By Proposition \ref{thm:cutpoint}, $a(X)=1.$ We show that $q_{min}^{m-1}(X)>m-1$. By hypothesis, $X$ is a graph obtained by attaching at least three complete graphs at one vertex. Graph $X$ satisfies the hypothesis of Theorem \ref{thm:smallQ}, hence $q_{min}^1(X)=\delta(X)-1$. By Theorem 4.3.1 of \cite{horn2012matrix}, $q_{min}^1(X)+(m-2)\delta\leq q_{min}^{m-1}(X)$, which proves the claim.
\end{proof}
\par

\section{Algebraic connectivity of some classes of graphs}\label{sec:3}
In this section, we describe the Laplacian spectrum and the algebraic connectivity of some classes of graphs. We start by defining some terms that will be used later. The graph $W(\eta,\mu)$ is a restricted graph. It is formed by attaching $\n$ copies of $K_{\u}$ in a common vertex,  where $\n\geq 2,\u\geq 3$. It is also known as {\em windmill graph}. {\em Friendship graphs}, $W(\n,3)$ are a special case of a windmill graph. The graph $W^\prime(\n,\u)$ is obtained by attaching $K_{\u}$ to each vertex of $K_{\n}.$ It can be seen that $W^\prime(\n,\u)$ is the line graph of $T(n,\n;\u-1,\u-1,\ldots,\u-1)$. The tree $T(n,k;x_1,x_2,\ldots,x_k)$ is obtained from the star graphs $K_{1,k},K_{1,x_1},\ldots,K_{1,x_k}$, by identifying the pendant vertices of $K_{1,k}$ and the centers of $K_{1,x_1}, \ldots,
K_{1,x_k}$ respectively, where $x_1\geq x_2\geq x_3\geq \cdots\geq x_k\geq 0$, $x_1\geq x_2>0$, $k\geq 2$ and $1+k+x_1+x_2+\cdots+x_k=n$ (see Figure \ref{fig:diameter4}). In Figure \ref{fig:T(2,2,3)1}, we give an example of $W(2,3)$ and $W^\prime(3,3).$ The {\em $k$-book graph}, denoted by $\mathbb{B}_k$, is defined as the graph cartesian product of $K_{1,k}$ with $K_2$. Recall that the {\em cartesian product} $X\Box Y$ of graphs  $X$ and $Y$ is a graph such that the vertex set of $X\Box Y$ is the cartesian product $V(X_1)\times V(X_2)$ and two vertices $(x_1,x_2)$ and $(x_1^\prime,x_2^\prime)$ are adjacent in $X\Box Y$ if and only if either
$x_1=x_1^\prime$ and $x_2$ is adjacent to $x_2^\prime$ in $Y$, or $x_2=x_2^\prime$ and $x_1$ is adjacent to $x_1^\prime$ in $X.$

\begin{figure}[h]
\centering
\begin{tikzpicture}[scale=0.8]
    \vertex (1) at (1,1)[label=above:$v$]{};
    \vertex (2) at (0,0)[label=left:$v_{1}$]{};
    \vertex (3) at (3,0)[label=right:$v_{k}$]{};
    \vertex (4) at (1,0)[label=left:$v_{2}$]{};
    \draw(1.5,0)node[right]{$\dots$};
 \vertex (5) at (-1,-1){};
    \vertex (6) at (-2.5,-1){};
    \draw(-2,-1)node[right]{$\dots$};
    \vertex (7) at (0,-1){};   
    \draw(0.5,-1)node[right]{$\dots$};
     \vertex (8) at (1.5,-1){}; 
   \vertex (9) at (2.5,-1){};  
   \vertex (10) at (4,-1){}; 
   \draw(3,-1)node[right]{$\dots$};
 \draw [thick,black,decorate,decoration={brace,amplitude=10pt,mirror},xshift=0.4pt,yshift=-6.5pt](-2.5,-1) -- (-1,-1) node[black,midway,yshift=-0.6cm] {\footnotesize $x_1$}; 
 \draw [thick,black,decorate,decoration={brace,amplitude=10pt,mirror},xshift=0.4pt,yshift=-6.5pt](0,-1) -- (1.5,-1) node[black,midway,yshift=-0.6cm] {\footnotesize $x_2$}; 
 \draw [thick,black,decorate,decoration={brace,amplitude=10pt,mirror},xshift=0.4pt,yshift=-6.5pt](2.5,-1) -- (4,-1) node[black,midway,yshift=-0.6cm] {\footnotesize $x_k$}; 
    \path[-]
    (1) edge (2)
    (1) edge (3)
    (1) edge (4)
    (2) edge (5)
    (2) edge (6)
    (4) edge (7)
    (4) edge (8)
    (3) edge (10)
    (3) edge (9)
     ;   
    \end{tikzpicture}
\caption{}
\label{fig:diameter4}
\end{figure}
\par
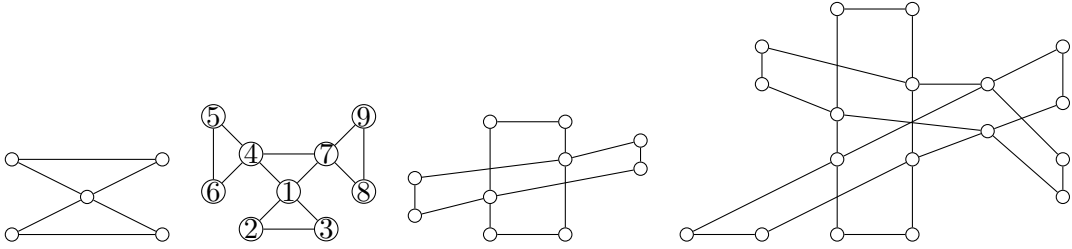
\begin{figure}[h]
\centering
\begin{tabular}{cccc}
\begin{tikzpicture}[yscale=0.5]
    \vertex (1) at (2,0){};
    \vertex (2) at (1,-1) {};
    \vertex (3) at (3,-1) {};
    \vertex (4) at (3,1){};
    \vertex (5) at (1,1) {};
    
    \path[-]
    (1) edge (2)
    (1) edge (3)
    (1) edge (4)
    (1) edge (5)
    (2) edge (3)
    (4) edge (5)
    ;
\end{tikzpicture}
&
     \begin{tikzpicture}[scale=0.5]
    \vertex (0) at (3,0){1};
    \vertex (1) at (4,1) {7};
    \vertex (2) at (2,1){4};
    \vertex (3) at (1,2){5};
    \vertex (4) at (1,0) {6};
    \vertex (5) at (2,-1) {2};
    \vertex (6) at (4,-1){3};
    \vertex (7) at (5,0){8};
    \vertex (8) at (5,2) {9};
    \path[-]
    (1) edge (0)
    (2) edge (0)
    (1) edge (2)
    (1) edge (7)
    (1) edge (8)
    (7) edge (8)
    (5) edge (6)
    (0) edge (5)
    (0) edge (6)
    (2) edge (3)
    (2) edge (4)
    (3) edge (4)
    ;
\end{tikzpicture}
& 
\begin{tikzpicture}[yscale=0.5]
    \vertex (1) at (0,0){};
    \vertex (2) at (0,1) {};
    \vertex (3) at (0,3) {};
    \vertex (4) at (1,0) {};
    \vertex (5) at (1,2) {};
    \vertex (6) at (1,3) {};
    \vertex (7) at (-1,1.5) {};
    \vertex (8) at (-1,0.5) {};
    \vertex (9) at (2,2.5) {};
    \vertex (10) at (2,1.75) {};
    
    \path[-]
    (1) edge (2)
    (2) edge (3)
    (5) edge (6)
    (4) edge (5)
    (6) edge (3)
    (1) edge (4)
    (7) edge (8)
    (7) edge (5)
    (5) edge (9)
    (9) edge (10)
    (2) edge (10)
    (2) edge (8)
    ;
\end{tikzpicture}
&
\begin{tikzpicture}[yscale=0.5]
    \vertex (1) at (0,0){};
    \vertex (2) at (0,2) {};
    \vertex (3) at (0,3.2) {};
    \vertex (4) at (0,6) {};
    \vertex (5) at (1,0) {};
    \vertex (6) at (1,2) {};
    \vertex (7) at (1,4){};
    \vertex (8) at (1,6) {};
    \vertex (9) at (-1,0) {};
    \vertex (10) at (-2,0) {};
    \vertex (11) at (2,4) {};
    \vertex (12) at (3,5) {};
    \vertex (13) at (2,2.75) {};
    \vertex (14) at (3,3.5) {};
    \vertex (15) at (-1,4.) {};
    \vertex (16) at (-1,5) {};
    \vertex (17) at (3,1) {};
    \vertex (18) at (3,2) {};
    \path[-]
    (1) edge (2)
    (2) edge (3)
    (4) edge (3)
    (1) edge (5)
    (6) edge (5)
    (7) edge (6)
    (7) edge (8)
    (4) edge (8)
    (10) edge (2)
    (9) edge (6)
    (10) edge (9)
    (2) edge (11)
    (13) edge (14)
    (6) edge (13)
    (11) edge (12)
    (12) edge (14)
    (3) edge (13)
    (7) edge (11)
    (3) edge (15)
    (7) edge (16)
    (15) edge (16)
    (13) edge (17)
    (18) edge (11)
    (17) edge (18)
    ;
\end{tikzpicture}
\\
\end{tabular}
\caption{$W(2,3),W^\prime(3,3),W(2,3)\times K_2$ and $W^\prime(3,3)\times K_2$}
\label{fig:T(2,2,3)1}
\end{figure}

\begin{theorem}\label{thm:SpecW(n,u)}
The eigenvalues of the Laplacian matrix of $W(\n,\u)\times K_m$ are 
$$0,m-1,\u(m-1),(m-1)(\u\n-\n+1),(m-1)(\u-1)-1,(m-1)(\u-1)+\u-2,\lambda_1,\lambda_2\,$$
where $\lambda_1,\lambda_2$ are the roots of $\lambda^2-((m-1)(\u-1)(\n+1)+\u-2)\lambda+\n(\u-1)((m-1)((m-1)(\u-1)+\u-2)-1)$.
\end{theorem}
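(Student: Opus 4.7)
The plan is to apply the decomposition described in the introduction: the Laplacian spectrum of $W(\n,\u)\times K_m$ is the union of the spectrum of $(m-1)\L(W(\n,\u))$ (each eigenvalue appearing once) and the spectrum of $Q_{m-1}(W(\n,\u))$ (each eigenvalue appearing $m-1$ times). So the task splits into two eigenvalue problems on the windmill itself, and the symmetry of the windmill will let us handle both in parallel.

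For $\L(W(\n,\u))$, label the apex $v_0$ and the $\n$ copies of $K_\u$ as $C_1,\ldots,C_\n$, each contributing $\u-1$ outer vertices. Any vector $x$ supported on the outer vertices of a single $C_i$ whose entries sum to zero is an eigenvector with eigenvalue $\u$, as one verifies directly using $\sum_{k\neq j} x_{v_k}=-x_{v_j}$ within $C_i$ (all other coordinates of $\L x$ vanish). This produces an $\n(\u-2)$-dimensional eigenspace for $\u$. Its orthogonal complement inside the full vertex space consists of vectors that are constant, say equal to $a_i$, on the outer vertices of $C_i$, together with an apex value $a_0$. On this $(\n+1)$-dimensional $\L$-invariant subspace, a small computation yields $0$ with multiplicity $1$, $1$ with multiplicity $\n-1$ (from $a_0=0$ and $\sum a_i=0$), and $\n\u-\n+1$ with multiplicity $1$. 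Scaling by $m-1$ gives the four eigenvalues $0,\ m-1,\ \u(m-1),\ (m-1)(\u\n-\n+1)$ in the statement.

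The analysis of $Q_{m-1}=A+(m-1)\D$ is structurally identical. Clique-supported zero-sum vectors now give the eigenvalue $(m-1)(\u-1)-1$ with multiplicity $\n(\u-2)$. On the symmetric $(\n+1)$-dimensional subspace, writing $\alpha=(m-1)(\u-1)+\u-2$ and $\beta=(m-1)\n(\u-1)$, the eigenvalue equations reduce to $\alpha a_i+a_0=\lambda a_i$ and $\beta a_0+(\u-1)\sum_i a_i=\lambda a_0$. Taking $\lambda=\alpha$ forces $a_0=0$ and $\sum a_i=0$, giving the eigenvalue $(m-1)(\u-1)+\u-2$ with multiplicity $\n-1$. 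Otherwise all $a_i$ coincide with a common $a$, and the pair $(a_0,a)$ satisfies a $2\times 2$ system whose characteristic polynomial $(\lambda-\alpha)(\lambda-\beta)-\n(\u-1)=0$ matches, on expansion, the polynomial in the statement; its two roots are $\x$ and $\y$.

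Assembling the pieces through the Kronecker decomposition produces exactly the eight distinct eigenvalues listed, and a final bookkeeping check confirms consistency: $(m-1)\L$ contributes $\n(\u-1)+1$ eigenvalues once each and $Q_{m-1}$ contributes $\n(\u-1)+1$ eigenvalues each $m-1$ times, for a grand total of $m(\n(\u-1)+1)=|V(W(\n,\u)\times K_m)|$. The only genuinely delicate step is isolating the right symmetric invariant subspace; once that reduction is in place the remaining verifications are mechanical, and the match of characteristic polynomials for $\x,\y$ is a direct comparison.
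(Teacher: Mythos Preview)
Your proposal is correct and follows essentially the same approach as the paper: both use the Kronecker decomposition from the introduction to reduce to $(m-1)\L(W(\n,\u))$ and $Q_{m-1}(W(\n,\u))$, then exploit the block structure of the windmill to split each problem into the zero-sum-within-a-clique eigenspace and the complementary ``constant on each clique'' subspace, obtaining the same quadratics for the remaining pair of eigenvalues. The only difference is presentational---the paper writes out explicit block matrices and eigenvectors while you phrase things in terms of invariant subspaces---but the underlying computations are identical.
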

\begin{proof}
The spectrum of $\L(W(\n,\u)\times K_m)$ is obtained by the union of spectra of $(m-1)\L(W(\n,\u))$ and $Q_{m-1}(W(\n,\u))$. From the structure of $W(\n,\u)$, we have
\par
$\L(W(\n,\u))=\begin{bmatrix}
\n(\u-1) & -\bold{1_{1\times \n(\u-1)}}\\
-\bold{1_{\n(\u-1)\times 1}} &  \I_{\n}\otimes P
\end{bmatrix},$ where $P=-\J_{\u-1}+\u\I_{\u-1}$.\\
For $\lambda=1$, the corresponding linearly independent eigenvectors are as follows $$\begin{bmatrix}
0\\
\bold{1_{(\u-1)\times 1}}\\
-\bold{1_{(\u-1)\times 1}}\\
\bold{0_{(\u-1)\times 1}}\\
\vdots\\
\bold{0_{(\u-1)\times 1}}
\end{bmatrix}, 
\begin{bmatrix}
0\\
\bold{1_{(\u-1)\times 1}}\\
\bold{0_{(\u-1)\times 1}}\\
-\bold{1_{(\u-1)\times 1}}\\
\vdots\\
\bold{0_{(\u-1)\times 1}}
\end{bmatrix},\ldots,
\begin{bmatrix}
0\\
\bold{1_{(\u-1)\times 1}}\\
\bold{0_{(\u-1)\times 1}}\\
\bold{0_{(\u-1)\times 1}}\\
\vdots\\
-\bold{1_{(\u-1)\times 1}}\\
\end{bmatrix}
.$$ 
Clearly, $\u$ is an eigenvalue of $P,$ let $v_1$ be the corresponding eigenvector. Consider\\ $V_1=(0,v_1,\bold{0_{(\u-1)\times 1}},\ldots,\bold{0_{(\u-1)\times 1}})^T,$ then $\L(W(\n,\u))V_1=\u V_1$ and it has algebraic multiplicity $\n(\u-2).$ Let $\lambda$ be an eigenvalue of $\L(W(\n,\u))$ with the corresponding eigenvector $$V_2 = (y,\bold{1_{(\u-1)\times 1}},\bold{1_{(\u-1)\times 1}},\ldots,\bold{1_{(\u-1)\times 1}})^T.$$ Hence, we obtain equations 
$(\n(\u-1)-\lambda)y=\n(\u-1)$ and $y=1-\lambda$ from $\L(W(\n,\u))V_2=\lambda V_2.$ Solving these equations, we get $\lambda^2-(\n\u-\n+1)\lambda=0.$ The eigenvalues of $(m-1)\L(W(\n,\u))$ are $0,m-1,\u(m-1),(m-1)(\n\u-\n+1)$ with multiplicity $1,\n-1,\n(\u-2),1$ respectively.
\par
The matrix $Q_{m-1}(W(\n,\u))$ has the following structure
$$\begin{bmatrix}
\n(\u-1)(m-1) & \bold{1_{1\times \n(\u-1)}}\\
\bold{1_{\n(\u-1)\times 1}} & \I_{\n}\otimes F
\end{bmatrix},$$
where $F=\J_{\u-1}+((m-1)(\u-1)-1)\I_{\u-1}$.
The eigenvalues of $Q_{m-1}(W(\n,\u))$ can be obtained in a way similar to $\L(W(\n,\u)).$ The eigenvalues of $Q_{m-1}(W(\n,\u))$ are $(m-1)(\u-1)-1,(m-1)(\u-1)+\u-2,\lambda_1,\lambda_2$ with multiplicity $\n(\u-2),\n-1,1,1$ respectively, where $\lambda_1,\lambda_2$ are the roots of $\lambda^2-((m-1)(\u-1)(\n+1)+\u-2)\lambda+\n(\u-1)((m-1)((m-1)(\u-1)+\u-2)-1).$
\end{proof}
\par
From Theorem \ref{thm:restricted}, we can conclude that $a(W(\n,\u)\times K_m)=m-1$. In the next result, we compute $a(W^\prime(\n,\u)\times K_m).$

\begin{theorem}\label{thm:specW'(n,u)}
The algebraic connectivity of $W^\prime(\n,\u)\times K_m$, where $\n,\u\geq 3$ is equal to $$(m-1)\left(\frac{\u+\n-\sqrt{(\u+\n)^2-4\n}}{2}\right).$$
\end{theorem}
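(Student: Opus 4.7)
The plan is to apply the identity $a(W'(\n,\u)\times K_m) = \min\{(m-1)a(W'(\n,\u)),\,q_{\min}^{m-1}(W'(\n,\u))\}$, which follows from the spectral decomposition described in the introduction. I would first compute the Laplacian spectrum of $W'(\n,\u)$ exactly via its symmetry group, and then show $q_{\min}^{m-1}(W'(\n,\u))\ge m-1$. Since $W'(\n,\u)$ is the line graph of a tree, the bound $a(L(X))\le 1$ from the introduction gives $a(W'(\n,\u))\le 1$, and consequently $(m-1)a(W'(\n,\u))\le m-1\le q_{\min}^{m-1}(W'(\n,\u))$, so the minimum is realized by $(m-1)a(W'(\n,\u))$.

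For the Laplacian spectrum, label the central $K_{\n}$'s vertices as $v_1,\dots,v_\n$ and for each $i$ let $v_i^1,\dots,v_i^{\u-1}$ denote the remaining vertices of the pendant $K_\u$ attached at $v_i$. The branch- and leaf-permuting symmetries let me split eigenvectors of $\L(W'(\n,\u))$ into three $\L$-invariant families: (a) supported on the leaves of a single branch with coordinates summing to zero, giving eigenvalue $\u$ with multiplicity $\n(\u-2)$; (b) constant value $x_i$ on the leaves of branch $i$ and $y_i$ at $v_i$ with $\sum_i x_i=\sum_i y_i=0$, where the reduced action satisfies $y_i=(1-\lambda)x_i$ together with $\lambda^2-(\n+\u)\lambda+\n=0$, and each root contributes multiplicity $\n-1$; and (c) $(x_i,y_i)=(x,y)$ independent of $i$, which yields $\lambda=0$ and $\lambda=\u$. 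The dimension count $\n(\u-2)+2(\n-1)+2=\n\u$ confirms completeness. A short calculation shows $\tfrac{\n+\u-\sqrt{(\n+\u)^2-4\n}}{2}<\u$ whenever $\u>1$, identifying this smaller root as $a(W'(\n,\u))$.

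Applying the same three-family decomposition to $Q_{m-1}(W'(\n,\u))$ yields eigenvalue $(m-1)(\u-1)-1$ from family (a), and two quadratics $(\lambda-A)(\lambda-B)=\u-1$ and $(\lambda-A)(\lambda-B')=\u-1$ from families (b) and (c), with $A=(m-1)(\u-1)+(\u-2)$, $B=(m-1)(\n+\u-2)-1$, and $B'=B+\n$. To show $q_{\min}^{m-1}(W'(\n,\u))\ge m-1$ I would verify: (i) $(m-1)(\u-1)-1\ge m-1$, i.e., $(m-1)(\u-2)\ge 1$, which is immediate from $m\ge 2$ and $\u\ge 3$; and (ii) for each quadratic, apply the standard observation that if $c>0$, $p\le\min(A,B)$, and $(p-A)(p-B)\ge c$, then $p$ lies below both roots of $(\lambda-A)(\lambda-B)=c$. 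Here $A-(m-1)=m(\u-2)$ and $B-(m-1)=(m-1)(\n+\u-3)-1$ are both positive when $\n,\u\ge 3$, and a direct product gives $(m-1-A)(m-1-B)=m(\u-2)\bigl[(m-1)(\n+\u-3)-1\bigr]\ge(\u-1)\cdot 1=\u-1$, where the last step uses $m(\u-2)\ge\u-1$ and $(m-1)(\n+\u-3)-1\ge 1$; the $B'$-case differs only by a strictly larger second factor.

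The main obstacle is organizing the symmetry decomposition carefully enough that the three eigenvector families exhaust $\mathbb{R}^{\n\u}$ for both $\L$ and $Q_{m-1}$, and calibrating the bounds in the tight case $m=2,\u=3$, where $(m-1)(\u-1)-1=m-1$ holds with equality; this leaves essentially no slack, so the inequalities in step (ii) must be verified with exact arithmetic rather than using any cruder estimate such as $q_{\min}^{m-1}\ge q_{\min}^{1}+(m-2)\delta$.
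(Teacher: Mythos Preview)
Your proposal is correct and follows essentially the same route as the paper: decompose $\L(W'(\eta,\mu))$ and $Q_{m-1}(W'(\eta,\mu))$ via the branch symmetry (the paper phrases this through the Kronecker form $\J_\eta\otimes G+\I_\eta\otimes(H-G)$, which yields exactly your families (a)--(c)), extract the quadratic $\lambda^2-(\eta+\mu)\lambda+\eta=0$ for $a(W'(\eta,\mu))$, and then verify $q_{\min}^{m-1}(W'(\eta,\mu))\ge m-1$. The paper handles this last step by contradiction, substituting $\lambda=m-1$ into the two $Q_{m-1}$-quadratics and simplifying; your factored form $(\lambda-A)(\lambda-B)=\mu-1$ together with the product bound $m(\mu-2)\bigl[(m-1)(\eta+\mu-3)-1\bigr]\ge\mu-1$ is a slightly cleaner execution of the same computation.
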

\begin{proof}
We first explain the spectrum of $\L(W^\prime(\n,\u))$ and $Q_{m-1}(W^\prime(\n,\u)).$ By the labelling defined in Figure \ref{fig:T(2,2,3)1}, $\L(W^\prime(\n,\u))=\J_{\n}\otimes G + \I_{\n}\otimes(H-G),$ where $G$ is a $\u\times \u$ matrix with $-1$ in $(1,1)$ position and zero elsewhere, and $H=-\J_\u+\u\I_{\u}+(-\n+1)G.$ The spectrum of $\L(W^\prime(\n,\u))$ is given by the spectrum of $H-G=-\J_{\u}+\u\I_{\u}-\n G$ and $H+(\n-1)G=-\J_{\u}+\u \I_{\u}$ with multiplicity $\n-1$ and 1 respectively. It is easy to see that $\u$ is an eigenvalue of $H-G$ and its linearly independent eigenvectors are $\{(0,1,-1,0,\ldots,0)^T,
(0,1,0,-1,\ldots,0)^T,\ldots,(0, 1,0,0,\ldots,-1)^T
\}$. Let $\lambda$ be an eigenvalue of $H-G$ with corresponding eigenvector $v_1=(x,1,1,\ldots,1)^T$ such that $(H-G)v_1=\lambda v_1$, and compare to obtain equations $(\n+\u-\lambda-1)x=\u-1$ and $x=1-\lambda$. Solving these equations, we get Equation \ref{eqn:wind1}
\begin{equation}\label{eqn:wind1}
\lambda^2-(\u+\n)\lambda+\n=0
\end{equation}
The eigenvalues of $H+(\n-1)G$ are $\u$ and 0 with multiplicity $\u-1$ and 1, respectively.
\par
Note that $Q_{m-1}(W^\prime(\n,\u))=\J_{\n}\otimes(-G) +\I_{\n}\otimes(W+G),$ where $W=\J_{\u}+((m-1)(\u-1)-1)\I_{\u}-(m-1)(\n-1)G$. The spectrum of $Q_{m-1}(W^\prime(\n,\u))$ is given by the spectrum of $W+G$ and $W-(\n-1)G$ with multiplicity $\n-1$ and 1, respectively. We see that $(m-1)(\u-1)-1$ is an eigenvalue of $W+G$ and $W-(\n-1)G$ with $\u-2$ dimensional linearly independent eigenvectors $$\{(0,1,-1,0,\dots,0)^T,(0,1,0,-1,\ldots,0)^T,
\ldots,(0,1,0,0,\ldots,-1)^T
\}.$$ Consider matrix equations $(W+G)v_2=\lambda^\prime v_2$ and $(W-(\n-1)G)v_3=\lambda^{\prime\prime} v_3$, where $v_2=(y,1,1,\ldots,1)^T$ and $v_3=(z,1,1,\ldots,1)^T.$ On comparing $(W+G)v_2=\lambda^\prime v_2$, we obtain equations $((m-1)(\u+\n-2)-1-\lambda^\prime)y=-(\u-1)$ and $y+(m-1)(\u-1)+\u-2=\lambda^\prime.$ Solving these equations, we get Equation \ref{eqn:wind2}
\begin{equation}\label{eqn:wind2}
(\lambda^\prime)^2-(m\u-m-2+(m-1)(\u+\n-2))\lambda^\prime+((m-1)(\u+\n-2)-1)(m\u-m-1)-\u+1=0.
\end{equation}
Similarly, we can solve $(W-(\n-1)G)v_3=\lambda^{\prime\prime} v_3$ and obtain Equation \ref{eqn:wind3}
\begin{equation}\label{eqn:wind3}
    (\lambda^{\prime\prime})^2-(m\u-m-2+(m-1)(\u+\n-2)+\n)\lambda^{\prime\prime}+((\n-1+(m-1)(\u+\n-2))(m\u-m-1)+1-\u=0.
\end{equation}

As $a(W^\prime(\n,\u))\leq m-1$, it can be seen that $(m-1)\u$ and $(m-1)(\u-1)-1$ are greater than or equal to $m-1$. Let $\lambda_1\leq \lambda_2$, $\lambda_3\leq \lambda_4$ and $\lambda_5\leq \lambda_6$, where $\{\lambda_1,\lambda_2\},\{\lambda_3,\lambda_4\}$ and $\{\lambda_5,\lambda_6\}$ are the roots of Equation \ref{eqn:wind1}, \ref{eqn:wind2} and \ref{eqn:wind3}, respectively. We first show that $(m-1)\lambda_1<m-1$. In contrast, if $\lambda_1\geq 1$, then $\u\leq 1$, which is a contradiction. Suppose that $\lambda_3<m-1$ and $\lambda_5<m-1$, after simplification we obtain Equations \ref{eqn:W'(n,u)1} and \ref{eqn:W'(n,u)2}, respectively
\begin{equation}\label{eqn:W'(n,u)1}
(\u-2)(m((\u+\n-3)(m-1)-1)-1)-1<0
\end{equation}
\begin{equation}\label{eqn:W'(n,u)2}
    (1-\u)(1-4m)+m^2(2-\u)(3-\n)+\u m((m-1)(\u-2)-2)<0.
\end{equation}
As $m\geq 2,\u\geq 3$ and $\n\geq 3$, the left-hand sides of Equations \ref{eqn:W'(n,u)1} and \ref{eqn:W'(n,u)2} are greater than or equal to zero, which is a contradiction.
\end{proof}
\begin{re}
Note that $a(W^\prime(\n,2)\times K_2)=\lambda_3$, where $\lambda_1,\lambda_3$ and $\lambda_5$ are defined in Theorem \ref{thm:specW'(n,u)}. Clearly, $\lambda_3\leq \lambda_5$ as $\lambda_5-\lambda_3=\frac{\n+\sqrt{\n^2-4\n+8}-\sqrt{4\n^2-8\n+8}}{2}\geq \frac{\n+\n-2-2\n+4}{2}\geq 1.$ If $\lambda_1-\lambda_3\leq 0$, then $4\leq 0$ which is a contradiction. 
\end{re}
As a consequence of Theorem \ref{thm:specW'(n,u)}, we obtain Corollary \ref{thm:corW'(n,u)}. In this regard, we define some terminology. A {\em rooted tree} is a tree in which one vertex has been designated as the root. The first vertex from which the tree originates is called a {\em root vertex}. For example, in Figure \ref{fig:diameter4} vertex $v$ is the root vertex. The {\em height of a vertex} in a rooted tree is the length of the longest downward path to a pendant vertex from that vertex. The vertex that has a branch from it to any other vertex is called a {\em parent vertex}. If vertex $u$ is the parent of vertex $v$, then $v$ is called a {\em child of $u$}.
\begin{table}[h]
    \centering
    \scalebox{0.75}{    \begin{tabular}{ccc}
    \begin{tabular}{cc}
    \begin{tikzpicture}[scale=0.7]
    \vertex (1) at (1,1) {};
    \vertex (2) at (0,0) {};
    \vertex (3) at (1,0) {};
    \vertex (4) at (2,0) {};
    \vertex (5) at (-1,-1) {};
    \vertex (6) at (0,-1) {};
    \vertex (7) at (1,-1) {};
    \vertex (8) at (2,-1) {};
    \vertex (9) at (3,-1) {};
    
    \path[-]
    (1) edge (2)
    (1) edge (3)
    (1) edge (4)
    (2) edge (5)
    (6) edge (2)
    (3) edge (7)
    (4) edge (8)
    (4) edge (9)
    ;
\end{tikzpicture}
&
 \begin{tikzpicture}[scale=0.7]
    \vertex (1) at (3,3) {1};
    \vertex (2) at (3,2) {7};
    \vertex (3) at (3,1) {11};
    \vertex (4) at (5,3) {4};
    \vertex (5) at (5,2) {9};
    \vertex (6) at (5,1) {14};
    \vertex (7) at (3.5,4.5){5};
    \vertex (8) at (3.5,3.5){6};
    \vertex (9) at (4.5,4.5){2};
    \vertex (10) at (4.5,3.5){3};
    \vertex (11) at (3.5,0){15};
    \vertex (12) at (3.5,-1){16};
    \vertex (13) at (4.5,0){12};
    \vertex (14) at (4.5,-1){13};
    \vertex (15) at (2,2){8};
    \vertex (16) at (6,2){10};
    \path[-]
    (1) edge (5)
    (1) edge (6)
    (2) edge (4)
    (2) edge (6)
    (3) edge (4)
    (3) edge (5)
    (1) edge (7)
    (1) edge (8)
    (4) edge (9)
    (4) edge (10)
    (10) edge (7)
    (9) edge (8)
    (3) edge (11)
    (3) edge (12)
    (6) edge (13)
    (6) edge (14)
    (14) edge (11)
    (13) edge (12)
    (2) edge (15)
    (5) edge (16)
     ;
\end{tikzpicture}
\\
 \end{tabular}
&
\begin{tabular}{|l|llllll|llll|llllll|}
\hline
&$1$&$2$&$3$&$4$&$5$&$6$&$7$&$8$&$9$&$10$&$11$&$12$&$13$&$14$&$15$&$16$\\
\hline
$1$&4 & 0 & 0 & 0 & -1 & -1 & 0 & 0 & -1 & 0 & 0 & 0 & 0 & -1 & 0 & 0\\
$2$&0 & 2 & 0 & -1 & 0 & -1 & 0 & 0 & 0 & 0 & 0 & 0 & 0 & 0 & 0 & 0 \\
$3$&0 & 0 & 2 & -1 & -1 & 0 & 0 & 0 & 0 & 0 & 0 & 0 & 0 & 0 & 0 & 0\\
$4$&0 & -1 & -1 & 4 & 0 & 0 & -1 & 0 & 0 & 0 & -1 & 0 & 0 & 0 & 0 & 0\\
$5$&-1 & 0 & -1 & 0 & 2 & 0 & 0 & 0 & 0 & 0 & 0 & 0 & 0 & 0 & 0 & 0\\
$6$&-1 & -1 & 0 & 0 & 0 & 2 & 0 & 0 & 0 & 0 & 0 & 0 & 0 & 0 & 0 & 0\\
\hline
$7$&0 & 0 & 0 & -1 & 0 & 0 & 3 & -1 & 0 & 0 & 0 & 0 & 0 & -1 & 0 & 0\\
$8$&0 & 0 & 0 & 0 & 0 & 0 & -1 & 1 & 0 & 0 & 0 & 0 & 0 & 0 & 0 & 0\\
$9$&-1 & 0 & 0 & 0 & 0 & 0 & 0 & 0 & 3 & -1 & -1 & 0 & 0 & 0 & 0 & 0\\
$10$&0 & 0 & 0 & 0 & 0 & 0 & 0 & 0 & -1 & 1 & 0 & 0 & 0 & 0 & 0 & 0\\
\hline
$11$&0 & 0 & 0 & -1 & 0 & 0 & 0 & 0 & -1 & 0 & 4 & 0 & 0 & 0 & -1 & -1\\
$12$&0 & 0 & 0 & 0 & 0 & 0 & 0 & 0 & 0 & 0 & 0 & 2 & 0 & -1 & 0 & -1\\
$13$&0 & 0 & 0 & 0 & 0 & 0 & 0 & 0 & 0 & 0 & 0 & 0 & 2 & -1 & -1 & 0\\
$14$&-1 & 0 & 0 & 0 & 0 & 0 & -1 & 0 & 0 & 0 & 0 & -1 & -1 & 4 & 0 & 0\\
$15$&0 & 0 & 0 & 0 & 0 & 0 & 0 & 0 & 0 & 0 & -1 & 0 & -1 & 0 & 2 & 0\\
$16$&0 & 0 & 0 & 0 & 0 & 0 & 0 & 0 & 0 & 0 & -1 & -1 & 0 & 0 & 0 & 2\\
\hline
\end{tabular}\\
\\
\end{tabular}}
    \caption{$X,\beta_2(X)$ and $\L(\beta_2(X))$}
    \label{tab:diameter4ex}
\end{table}

\begin{cor}\label{thm:corW'(n,u)}
\begin{enumerate}
    \item \label{cor:child1}
    Let $T=T(n,\n;x_1,x_2,x_3,\ldots,x_{\n})$ such that if there exist atleast two $x_i$ that are equal (say $\u\geq 2$), where $1\leq i \leq \n.$ Then
$a(\be(T))\leq a(W^\prime(\n,\u+1)).$
\item \label{cor:child2}
Let $T$ be a rooted tree and, at some height $h\geq 2$ $\exists$ atleast two vertices $v_i,v_j$ of the same parent vertex that has $\n\geq 3$ children, such that $|C(v_i)|=|C(v_j)|=\u$, where $|C(v_i)|$ denotes the number of children of parent vertex $v_i$, $C(v_i)$ and $C(v_j)$ are pendant vertices of $T$. Then $a(\be(T))\leq a(W^\prime(\n,\u+1)).$
\end{enumerate}
\end{cor}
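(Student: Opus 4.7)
The plan is to produce, in each part, a non-trivial Laplacian eigenvector of $L(T)$ whose eigenvalue satisfies exactly the quadratic from Equation \ref{eqn:wind1} for $W^\prime(\n,\u+1)$. Once this is done the conclusion follows by combining
$a(\be(T))=\min\{(m-1)a(L(T)),\,q_{\min}^{m-1}(L(T))\}\le (m-1)a(L(T))$
with the closed-form expression supplied by Theorem \ref{thm:specW'(n,u)}.

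First I would fix the local picture in $L(T)$. Let $p$ denote the common parent of $v_i$ and $v_j$ — the centre of the double star in part \ref{cor:child1}, an interior vertex in part \ref{cor:child2} — and write $\n$ for the size of the $p$-clique of $L(T)$, i.e.\ the number of edges of $T$ incident to $p$. For each of $v_i$ and $v_j$, the edges of $T$ incident to that vertex form a clique of size $\u+1$ in $L(T)$: the edge $\{p,v_i\}$ is the unique vertex of this clique that also lies in the $p$-clique, while the $\u$ edges $\{v_i,u_{i,k}\}$ are ``pendants'' with no neighbours outside this clique (the $u_{i,k}$ being leaves of $T$), and analogously for $v_j$.

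Next I would propose the trial vector defined by $f(\{v_i,u_{i,k}\})=1$ and $f(\{v_j,u_{j,k}\})=-1$ for $1\le k\le \u$, $f(\{p,v_i\})=\alpha$ and $f(\{p,v_j\})=-\alpha$, with $f=0$ on every other vertex of $L(T)$, where $\alpha$ is a free parameter. By construction $f\perp \mathbf{1}$. The eigenvalue equation $\L(L(T))f=\lambda f$ is then verified cell by cell. At a pendant $\{v_i,u_{i,k}\}$ of degree $\u$, the equation reads $\u\cdot1-(\alpha+(\u-1))=\lambda$ and gives $\alpha=1-\lambda$. At the shared vertex $\{p,v_i\}$ of degree $(\n-1)+\u$, the only non-zero neighbour contributions are $-\alpha$ (from $\{p,v_j\}$) and $\u\cdot 1$ (from the $\u$ pendants), so the equation reads $((\n-1)+\u)\alpha-(\u-\alpha)=\lambda\alpha$, i.e.\ $(\n+\u-\lambda)\alpha=\u$. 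Combining the two relations yields $\lambda^{2}-(\n+\u+1)\lambda+\n=0$, which is precisely Equation \ref{eqn:wind1} for $W^\prime(\n,\u+1)$. At every remaining vertex of $L(T)$ — the other $p$-clique vertices $\{p,v_\ell\}$ for $\ell\ne i,j$, the edge $\{p^\prime,p\}$ to $p$'s own parent in part \ref{cor:child2}, and every vertex inside the subtrees rooted at the $v_\ell$ with $\ell\ne i,j$ — the only candidates for non-zero incoming contributions are $\alpha$ at $\{p,v_i\}$ and $-\alpha$ at $\{p,v_j\}$, which cancel by the $v_i\leftrightarrow v_j$ antisymmetry; hence $(\L f)(\cdot)=0$ there automatically.

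Putting the pieces together, the smaller root $\lambda_-$ of the quadratic is a Laplacian eigenvalue of $L(T)$ attained by a vector orthogonal to $\mathbf{1}$, so $a(L(T))\le\lambda_-=a(W^\prime(\n,\u+1))$; Theorem \ref{thm:specW'(n,u)} combined with $a(\be(T))\le(m-1)a(L(T))$ then finishes the proof. The main obstacle I anticipate is the routine-looking ``$(\L f)=0$ away from the support'' check in part \ref{cor:child2}, where the remainder of $T$ is completely unconstrained: one must observe that the $v_i\leftrightarrow v_j$ swap extends to an automorphism of the star-shaped neighbourhood of $p$ inside $L(T)$, so every vertex outside the support of $f$ falls into an equitable-partition cell on which $f$ vanishes and whose incoming non-zero contributions come only from $\{p,v_i\}$ and $\{p,v_j\}$, in the cancelling combination $\alpha+(-\alpha)=0$.
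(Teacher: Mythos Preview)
Your proposal is correct and takes essentially the same approach as the paper: both exploit the $v_i\leftrightarrow v_j$ symmetry to produce an antisymmetric Laplacian eigenvector of $L(T)$ whose eigenvalue satisfies Equation~\ref{eqn:wind1} with $\u\to\u+1$ --- the paper via the block decomposition $\L(L(T))=\begin{bmatrix}H&G&C\\G&H&C\\C^T&C^T&D\end{bmatrix}$ and the observation that $(v,-v,0)^T$ is an eigenvector whenever $(H-G)v=\lambda v$, you via an explicit vertex-by-vertex verification of that same vector. Your handling of part~\ref{cor:child2} is more detailed than the paper's one-line ``immediate from Part~\ref{cor:child1}''.
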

\begin{proof}
{\em Proof of part \ref{cor:child1}}. The structure of $L(T)$ is obtained by attaching $K_{x_i}$, where $1\leq i\leq \n$ at exactly one vertex of $K_{\n}.$ By hypothesis, it can be seen that there are at least two vertices in $K_{\n}$ to which $K_{\u}$ is attached. Label the vertices of one copy of $K_u$ by $\{1,2,\ldots,\u\}$ and another by $\{\u+1,\u+2,\ldots,2\u\}$, where $1$ and $\u+1$ are the vertices of $K_{\n}.$ Then observe that $\L(L(T))
=\begin{bmatrix}
H & G & C\\
G & H & C\\
C^T & C^T & D
\end{bmatrix},$ where $G$ is a $(\u+1)\times (\u+1)$ matrix with $-1$ in $(1,1)$ position and zero elsewhere, and $H=-\J_{\u+1}+(\u+1)\I_{\u+1}+(-\n+1)G.$ Let $\lambda$ be an eigenvalue of $H-G$ afforded by the eigenvector $v.$ Consider $V=(v,-v,0)^T$, then $\L(L(T))V=\lambda V.$ Now the result follows by Theorem \ref{thm:specW'(n,u)},.

{\em Proof of part 2}. It is immediate from Part \ref{cor:child1}.
\end{proof}

Next, we provide a bound for the algebraic connectivity of $L(\mathbb{B}_k)\times K_m.$ Let $H$ be a $n\times n$ matrix partitioned as $H=\begin{bmatrix}
A_{11} & A_{12}\\
A_{21} & A_{22}
\end{bmatrix},$ where $A_{11}$ and $A_{22}$ are square matrices. If $A_{11}$ is non-singular, then the Schur complement of $A_{11}$ in $H$ is defined to be the matrix $A_{22}-A_{21}A_{11}^{-1}A_{12}.$ For Schur complement, we have
$det(H)=det(A_{11})det(A_{22}-A_{21}A_{11}^{-1}A_{12}).$ 
Similarly, if $A_{22}$ is non-singular, then the Schur complement of $A_{22}$ in $H$ is defined as the matrix $A_{11}-A_{12}A_{22}^{-1}A_{21}$, and we can obtain $det(H)=det(A_{22})det(A_{11}-A_{12}A_{22}^{-1}A_{21}).$

\begin{ex}\label{ex:book}
In Figure \ref{fig:book}, we demonstrate the edge labeling process used in Theorem \ref{thm:book}. 
\begin{figure}[h]
\centering
\begin{tikzpicture}[xscale=0.9,yscale=0.6]
    \draw
    (0,0) node[circle,black, draw](1){}
    (0,3) node[circle,black, draw] (3){}
    (1,0) node[circle,black, draw](4) {}
    (1,3) node[circle,black, draw] (6){}
    (0,1.5) node[circle,black, draw](7){}
    (2,1.5) node[circle,black, draw](8){}
    (2,0)node[circle,black, draw](9) {}
    (4,0) node[circle,black, draw](10){};
     
\draw[-] (1) -- node[left] {$e_3$} (7);
\draw[-] (4) -- node[right] {$e_6$} (8); \draw[-] (1) -- node[below] {$e_9$} (4); \draw[-] (8) -- node[above] {$e_1$} (7);  
\draw[-] (9) -- node[right] {$e_4$} (7);
\draw[-] (8) -- node[above] {$e_7$} (10);
\draw[-] (9) -- node[below] {$e_{10}$} (10);   
\draw[-] (3) -- node[left] {$e_2$} (7); \draw[-] (6) -- node[right] {$e_5$} (8);
\draw[-] (3) -- node[above] {$e_8$} (6);
\end{tikzpicture}
\caption{$\mathbb{B}_3$}
\label{fig:book}
\end{figure}
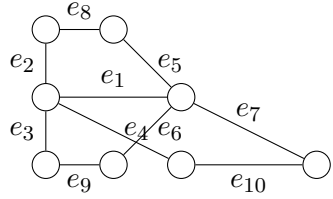
\end{ex}

\begin{theorem}\label{thm:book}
Let $X=\mathbb{B}_k$, where $k\geq 3$. Then $a(L(X))= \frac{4+k- \sqrt{k^2+8}}{2}.$
\end{theorem}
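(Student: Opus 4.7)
The plan is to decompose the Laplacian of $L(\mathbb{B}_k)$ into symmetry-invariant blocks, compute the spectrum of each block explicitly, and then identify the smallest positive eigenvalue.

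Label the vertices of $L(\mathbb{B}_k)$ as in Example~\ref{ex:book}: let $a$ be the spine vertex ($e_1$), let $l_1,\dots,l_k$ and $r_1,\dots,r_k$ be the two groups of ``page'' edges incident to the two centers of $\mathbb{B}_k$, and let $b_1,\dots,b_k$ be the binding vertices. Then $a$ is adjacent to every $l_i,r_i$; the $l_i$'s form a clique and the $r_i$'s form a clique; each $b_i$ is adjacent only to $l_i$ and $r_i$; and no two $b_i$'s are adjacent. In particular $\deg a=2k$, $\deg l_i=\deg r_i=k+1$, and $\deg b_i=2$. The graph admits two obvious automorphisms: the page-swap $\sigma$ sending $l_i\leftrightarrow r_i$ (fixing $a$ and $b_i$), and the simultaneous permutation of the indices $1,\dots,k$ across all three orbits.

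Using these I would decompose $\mathbb{R}^{3k+1}$ as a direct sum of four Laplacian-invariant subspaces whose dimensions sum to $3k+1$. First, the $3$-dimensional fully symmetric subspace (constants $v_a=\alpha$, $v_{l_i}=v_{r_i}=u$, $v_{b_i}=\beta$), on which the Laplacian acts by the $3\times 3$ matrix with rows $(2k,-2k,0),(-1,2,-1),(0,-2,2)$; its characteristic polynomial factors as $\lambda\bigl(\lambda^{2}-(2k+4)\lambda+(6k+2)\bigr)$, giving eigenvalues $0$ and $(k+2)\pm\sqrt{k^{2}-2k+2}$. Second, the $2(k-1)$-dimensional page-symmetric, trace-zero subspace ($v_a=0$, $v_{l_i}=v_{r_i}=u_i$ with $\sum u_i=0$, $v_{b_i}=\beta_i$ with $\sum\beta_i=0$), on which the eigenvalue equation decouples into $k-1$ independent copies of $\bigl(\begin{smallmatrix}k+2&-1\\-2&2\end{smallmatrix}\bigr)(u_i,\beta_i)^{T}=\lambda(u_i,\beta_i)^{T}$, contributing the roots $\tfrac{k+4\pm\sqrt{k^{2}+8}}{2}$ of $\lambda^{2}-(k+4)\lambda+(2k+2)=0$, each with multiplicity $k-1$. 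Third, the $1$-dimensional page-antisymmetric constant subspace ($v_{l_i}=1,\,v_{r_i}=-1,\,v_a=v_{b_i}=0$), giving $\lambda=2$. Fourth, the $(k-1)$-dimensional page-antisymmetric, trace-zero subspace ($v_{l_i}=-v_{r_i}=c_i$ with $\sum c_i=0$, other entries $0$), giving $\lambda=k+2$. A trace check ($\sum\lambda_i = 2k^{2}+6k$) confirms that nothing has been lost.

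The remaining step is to decide which of the positive eigenvalues is smallest. The inequality $\tfrac{k+4-\sqrt{k^{2}+8}}{2}<2$ is immediate from $\sqrt{k^{2}+8}>k$, and the bound $\tfrac{k+4-\sqrt{k^{2}+8}}{2}<k+2$ is trivial. The main obstacle is the comparison with $(k+2)-\sqrt{k^{2}-2k+2}$; after rearrangement this becomes $2\sqrt{k^{2}-2k+2}<k+\sqrt{k^{2}+8}$, and squaring (both sides positive), simplifying, and squaring once more, noting that the intermediate inequality $k-4<\sqrt{k^{2}+8}$ is automatic for $k\le 4$ and reduces to $k>1$ for $k\ge 5$, verifies it for all $k\ge 3$. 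Hence $a(L(\mathbb{B}_k))=\tfrac{k+4-\sqrt{k^{2}+8}}{2}$.
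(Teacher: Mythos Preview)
Your argument is correct, and it takes a genuinely different route from the paper's proof. The paper writes $\mathfrak{L}(L(\mathbb{B}_k))$ in the block form
\[
\begin{bmatrix}
2k & -\mathbf{1}_{1\times 2k} & \mathbf{0}\\
-\mathbf{1}_{2k\times 1} & S & Z\\
\mathbf{0} & Z^{T} & 2\mathbb{I}_k
\end{bmatrix}
\]
and eliminates the last block row/column, and then the first row/column, by two successive Schur complements; the resulting $2k\times 2k$ matrix has the circulant-like form $\left[\begin{smallmatrix}A&B\\ B&A\end{smallmatrix}\right]$, whose determinant splits as $\det(A+B)\det(A-B)$, yielding the factorisation
\[
\lambda(\lambda-2)(k+2-\lambda)^{k-1}\bigl(-\lambda^{2}+2(k+2)\lambda-(6k+2)\bigr)\bigl(\lambda^{2}-(k+4)\lambda+2(k+1)\bigr)^{k-1}.
\]
You instead exploit the automorphism group (the $l\leftrightarrow r$ involution and the $S_k$-action on the page index) to split $\mathbb{R}^{3k+1}$ into four invariant pieces and read the spectrum off the small quotient matrices; this is essentially the equitable-partition viewpoint. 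Your decomposition recovers exactly the same list of eigenvalues with the same multiplicities, and your comparison of $\tfrac{k+4-\sqrt{k^{2}+8}}{2}$ with $2$ and with $(k+2)-\sqrt{k^{2}-2k+2}$ matches the paper's final step. The advantage of your approach is that the multiplicities $k-1$, $k-1$, $1$ are explained by the representation-theoretic structure rather than emerging from a determinant calculation, and the method transfers readily to other graphs with a large symmetry group; the paper's Schur-complement route has the merit of being purely mechanical and not requiring one to guess the invariant subspaces in advance. One cosmetic remark: the $2\times 2$ and $3\times 3$ matrices you display are not symmetric because your chosen basis vectors in $\mathbb{R}^{3k+1}$ are not orthonormal (each $u_i$ is carried twice, each $\beta_i$ once); this is harmless for the eigenvalue computation, but you might want to say so explicitly.
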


\begin{proof}
From the labeling defined in Example \ref{ex:book},
$$\L(L(X))=\begin{bmatrix}
2k & -\bold{1_{1\times 2k}} & \bold{0_{1\times k}}\\
-\bold{1_{2k\times 1}}& S & Z\\
 \bold{0_{k\times 1}}& Z^T & 2\I_k
\end{bmatrix},$$
where $S=
\I_2\otimes (-\J_k+(k+2)\I_k)$
and $Z^T=\begin{bmatrix}
-\I_{k}& -\I_{k} \end{bmatrix}.$
The characterstic polynomial of $\L(L(X))$ is 
$$det(\L(L(X))-\lambda \I_{3k+1})=det\left(\left[\begin{array}{cc|c}
2k-\lambda & -\bold{1_{1\times 2k}}  & \bold{0_{1\times k}}\\
-\bold{1_{2k\times 1}}& S-\lambda\I_{2k}& Z\\
\hline
\bold{0_{k\times 1}} & Z^T & (2-\lambda)\I_k
\end{array}
\right]\right).$$ By the Schur complement formula,
\begin{align*}
det(\L(L(X))-\lambda \I_{3k+1})&=(2-\lambda)^k det\left(\begin{bmatrix}
2k-\lambda & -\bold{1_{1\times 2k}}\\
-\bold{1_{2k\times 1}} & S-\lambda\I_{2k}
\end{bmatrix}-\begin{bmatrix}
0\\
Z
\end{bmatrix}((2-\lambda)\I_k)^{-1}\begin{bmatrix}
0 & Z^T\\
\end{bmatrix}\right)\\
&=(2-\lambda)^k det\left(\begin{bmatrix}
2k-\lambda & -\bold{1_{1\times 2k}}\\
-\bold{1_{2k\times 1}} & S-\lambda\I_{2k}
\end{bmatrix}-\frac{1}{2-\lambda}\begin{bmatrix}
0 & 0\\
0 & \J_2\otimes\I_k
\end{bmatrix}\right)\\
=&(2-\lambda)^k det\left(\begin{bmatrix}
2k-\lambda & -\bold{1_{1\times 2k}}\\
-\bold{1_{2k\times 1}} & S-\lambda\I_{2k}-\frac{1}{2-\lambda}(\J_2\otimes\I_k)
\end{bmatrix}\right).
\end{align*}
We again apply the Schur complement formula to the last expression, 
$$det(\L(L(X))-\lambda \I_{3k+1})=(2-\lambda)^k(2k-\lambda)det\left(S-\lambda\I_{2k}-\frac{1}{2-\lambda}(\J_2\otimes\I_k)-\frac{1}{2k-\lambda}\J_{2k}\right).$$
It can be verified that $S-\lambda\I_{2k}-\frac{1}{2-\lambda}(\J_2\otimes\I_k)-\frac{1}{2k-\lambda}\J_{2k}=\begin{bmatrix}
A & B\\
B & A
\end{bmatrix}$, where $$A=-\left(\frac{1}{2k-\lambda}+1\right)\J_k+\left(k+2-\lambda-\frac{1}{2-\lambda}\right)\I_k,B=-\frac{1}{2k-\lambda}\J_k-\frac{1}{2-\lambda}\I_k.$$
The spectrum of $\begin{bmatrix} 
A &B\\
B & A 
\end{bmatrix}$ is given by the union of spectra of $A+B$ and $A-B$ (See \cite{davis2013circulant}). We have $A+B=-(\frac{2}{2k-\lambda}+1)\J_k+(k+2-\lambda-\frac{2}{2-\lambda})\I_k$ and $A-B=-\J_k+(k+2-\lambda)\I_k.$ Therefore, $$det(\L(L(X))-\lambda \I_{3k+1})=(k+2-\lambda)^{k-1}(\lambda-2)\lambda(-\lambda^2+2(k+2)\lambda-(6k+2))(\lambda^2-(4+k)\lambda+2(k+1))^{k-1}.$$ 
Thus, the eigenvalues of $\L(L(X))$ are $0,k+2,2,\frac{4+k\pm \sqrt{k^2+8}}{2}$ and $k+2\pm \sqrt{k^2-2k+2}$ with multiplicity $1,k-1,1,k-1$ and 1, respectively. 
\par
Now, we show that $a(L(X))=\frac{4+k-\sqrt{k^2+8}}{2}.$ If $\frac{4+k- \sqrt{k^2+8}}{2}\geq 2$ and $\frac{4+k- \sqrt{k^2+8}}{2}\geq k+2- \sqrt{k^2-2k+2}$, then $0\geq 8$ and $2k^2(k-1)\leq 0$ respectively, which is a contradiction. The result holds.
\end{proof}
From the above theorem, we can conclude that $a(L(X)\times K_m)\leq (m-1)\frac{4+k- \sqrt{k^2+8}}{2}$.

\section{Acknowledgement} 
The authors thank the handling editor and the anonymous reviewers for their valuable comments and suggestions.

\begin{table}[h]
    \centering
    \scalebox{0.9}{
    \begin{tabular}{|c|c|c|c|c|c|c|c|c|}
    \hline
    n & $X$ & $a(X)$ & $\beta_2(X)$ & $\beta_3(X)$ & $\beta_4(X)$ & $\beta_5(X)$ & $\beta_6(X)$ & $\beta_7(X)$\\
    \hline
        $5$ & 
         \begin{tikzpicture}
    \vertex (1) at (0,0) {};
    \vertex (2) at (1,0) {};
    \vertex (3) at (2,0) {};
    \vertex (4) at (3,0.5) {};
    \vertex (5) at (3,-0.5) {};
    ;
    \path[-]
    (1) edge (2)
    (2) edge (3)
    (3) edge (4)
    (3) edge (5)
    ;
\end{tikzpicture}
& 0.519 & 0.43 & 1.72 & 2.82 & 3.87 & 4.89 & 5.91 \\
\hline
 $6$ & 
         \begin{tikzpicture}

\vertex (6) at (-1,0) {};
    \vertex (1) at (0,0) {};
    \vertex (2) at (1,0) {};
    \vertex (3) at (2,0) {};
    \vertex (4) at (3,0.5) {};
    \vertex (5) at (3,-0.5) {};
    ;
    \path[-]
    (1) edge (2)
    (2) edge (3)
    (3) edge (4)
    (3) edge (5)
    (6) edge (1)
    ;
\end{tikzpicture}
& $0.325$ & 0.224 & 1.037 & 1.556 &2.075 &2.59 &3.112 \\
\hline
6 &
\begin{tikzpicture}    
    \vertex (1) at (1,1) {};
    \vertex (2) at (2,1) {};
    \vertex (3) at (3,2) {};
    \vertex (4) at (3,0) {};
    \vertex (5) at (4,2) {};
    \vertex (6) at (4,0) {};
    \path[-]
    (1) edge (2)
    (2) edge (3)
    (2) edge (4)
    (3) edge (5)
    (4) edge (6)
     ;
     \end{tikzpicture}
     & 0.381 & 0.381 & 1.39 & 2.09& 2.78 & 3.486 & 4.18
\\
\hline
$6$ & 
    \begin{tikzpicture}
    \vertex (1) at (0,0) {};
    \vertex (2) at (1,0) {};
    \vertex (3) at (2,0) {};
    \vertex (4) at (3,0.5) {};
    \vertex (5) at (3,-0.5){};
    \vertex (6) at (3,0) {};
    ;
    \path[-]
    (1) edge (2)
    (2) edge (3)
    (3) edge (4)
    (3) edge (5)
    (3) edge (6)
    ;
\end{tikzpicture}
& $0.486$ & $0.627$ & 1.824 & 2.88 & 3.91 & 4.93 & 5.94\\
\hline
 6 &
\begin{tikzpicture}[scale=0.7]
    \vertex (1) at (1,1) {};
    \vertex (2) at (3,1) {};
    \vertex (3) at (4,2) {};
    \vertex (4) at (4,0) {};
    \vertex (5) at (0,2) {};
    \vertex (6) at (0,0) {};
    
    \path[-]
    (1) edge (5)
    (1) edge (6)
    (1) edge (2)
    (2) edge (3)
    (4) edge (2)
     ;
\end{tikzpicture}
& 0.438 & 1 & 2 & 3 &4 &5&6
\\
\hline
$7$ & 
         \begin{tikzpicture}
\vertex (7) at (-2,0) {};
\vertex (6) at (-1,0) {};
    \vertex (1) at (0,0) {};
    \vertex (2) at (1,0) {};
    \vertex (3) at (2,0) {};
    \vertex (4) at (3,0.5) {};
    \vertex (5) at (3,-0.5) {};
    ;
    \path[-]
    (1) edge (2)
    (2) edge (3)
    (3) edge (4)
    (3) edge (5)
    (6) edge (1)
    (6) edge (7)
    ;
\end{tikzpicture}
& 0.225 & $0.13$ & 0.649 & 0.974 & 1.299 & 1.62 &1.944\\
\hline
$7$ & 
         \begin{tikzpicture}
    \vertex (1) at (4,0.5){};
    \vertex (2) at (1,0){};
    \vertex (3) at (2,0){};
    \vertex (4) at (3,0.5){};
    \vertex (5) at (3,-0.5){};
    \vertex (6) at (4,-0.5){};
    \vertex (7) at (5,-0.5){};
    ;
    \path[-]
    (1) edge (4)
    (2) edge (3)
    (3) edge (4)
    (3) edge (5)
    (5) edge (6)
    (6) edge (7)
    ;
\end{tikzpicture}
& 0.260 
& 0.220 & 0.826 & 1.23 & 1.652 & 2.065 &2.478\\
\hline
$7$ & 
\begin{tikzpicture}
    \vertex (-1) at (-1,0){};
    \vertex (1) at (0,0){};
    \vertex (2) at (1,0){};
    \vertex (3) at (2,0){};
    \vertex (4) at (3,0.5){};
    \vertex (5) at (3,-0.5){};
    \vertex (6) at (3,0){};
    ;
    \path[-]
    (1) edge (2)
    (2) edge (3)
    (3) edge (4)
    (3) edge (5)
    (3) edge (6)
    (1) edge (-1)
    ;
\end{tikzpicture}
& 0.296 & 0.28 & 0.9717 & 1.45&1.943 &2.42 &2.91\\
\hline
7 &
\begin{tikzpicture}    
    \vertex (1) at (3,1) {};
    \vertex (2) at (2,1) {};
    \vertex (3) at (3,2) {};
    \vertex (4) at (3,0) {};
    \vertex (5) at (4,2) {};
    \vertex (6) at (4,0) {};
    \vertex (7) at (4,1) {};

    \path[-]
    (1) edge (2)
    (2) edge (3)
    (2) edge (4)
    (3) edge (5)
    (4) edge (6)
    (1) edge (7)
    
     ;
     \end{tikzpicture}
   & 0.381  & 0.381 & 1.39 & 2.09&2.075 &2.59 &3.112\\
\hline
7 &
\begin{tikzpicture}
    \vertex (1) at (1,1) {};
    \vertex (2) at (2,1) {};
    \vertex (3) at (3,1) {};
    \vertex (4) at (4,2) {};
    \vertex (5) at (4,0) {};
    \vertex (6) at (0,2) {};
    \vertex (7) at (0,0) {};
    \path[-]
    (1) edge (2)
    (2) edge (3)
    (3) edge (4)
    (3) edge (5)
    (1) edge (6)
    (1) edge (7)
    ;
\end{tikzpicture}
& 0.267&
0.43 & 0.876 & 1.31&1.72 &2.15 &2.58
\\
\hline
7 &
\begin{tikzpicture}[scale=0.7]
    \vertex (1) at (1,1) {};
    \vertex (2) at (3,1) {};
    \vertex (3) at (4,2) {};
    \vertex (4) at (4,0) {};
    \vertex (5) at (0,2) {};
    \vertex (6) at (0,0) {};
    \vertex (7) at (0.5,1.5) {};
    
    \path[-]
     (1) edge (6)
    (1) edge (2)
    (2) edge (3)
    (4) edge (2)
    (5) edge (7)
    (1) edge (7)
     ;
\end{tikzpicture}
& 0.322& 0.45& 1.26 & 1.89&2.52 &2.95 &3.38
\\
\hline

$7$ & 
         \begin{tikzpicture}
    \vertex (1) at (4,0.5){};
    \vertex (2) at (1,0){};
    \vertex (3) at (2,0){};
    \vertex (4) at (3,0.5){};
    \vertex (5) at (3,-0.5){};
    \vertex (6) at (4,-0.5){};
    \vertex (7) at (2,-0.5){};
    ;
    \path[-]
    (1) edge (4)
    (2) edge (3)
    (3) edge (4)
    (3) edge (5)
    (5) edge (6)
    (3) edge (7)
    ;
\end{tikzpicture}
& 0.381
& 0.581 & 1.52 & 2.29 &3.055 &3.81 &4.58\\
\hline

$7$ & 
         \begin{tikzpicture}
    \vertex (1) at (0,0) {};
    \vertex (2) at (1,0) {};
    \vertex (3) at (2,0) {};
    \vertex (4) at (3,0.5) {};
    \vertex (5) at (3,-0.5){};
    \vertex (6) at (3,0) {};
    \vertex (7) at (2,1) {};
    ;
    \path[-]
    (1) edge (2)
    (2) edge (3)
    (3) edge (4)
    (3) edge (5)
    (3) edge (6)
    (3) edge (7)
    ;
\end{tikzpicture}
& 0.466 & 0.72 & 1.86 & 2.91 & 3.93 &4.94 &5.95\\
\hline
 7  &
\begin{tikzpicture}[scale=0.7]
    \vertex (1) at (1,1) {};
    \vertex (2) at (3,1) {};
    \vertex (3) at (4,2) {};
    \vertex (4) at (4,0) {};
    \vertex (5) at (0,2) {};
    \vertex (6) at (0,0) {};
    \vertex (7) at (4,1) {};
    
    \path[-]
    (1) edge (5)
    (1) edge (6)
    (1) edge (2)
    (2) edge (3)
    (4) edge (2)
    (2) edge (7)
     ;
\end{tikzpicture}
& 0.398
& 1 & 2 & 3 &4 &5&6\\
\hline
\end{tabular}
    }
    \caption{Caption}
    \label{tab:algebraic}
\end{table}
\bibliographystyle{abbrv}
\bibliography{bibliography}

\begin{thebibliography}{10}

\bibitem{bapat2012algebraic}
R.~Bapat, A.~Lal, and S.~Pati.
\newblock On algebraic connectivity of graphs with at most two points of
  articulation in each block.
\newblock {\em Linear and Multilinear Algebra}, 60(4):415--432, 2012.

\bibitem{cvetkovic2009towards}
D.~Cvetkovi{\'c} and S.~K. Simi{\'c}.
\newblock Towards a spectral theory of graphs based on the signless laplacian,
  i.
\newblock {\em Publications de l'Institut Mathematique}, 85(105):19--33, 2009.

\bibitem{cvetkovic2010towards2}
D.~Cvetkovi{\'c} and S.~K. Simi{\'c}.
\newblock Towards a spectral theory of graphs based on the signless laplacian,
  ii.
\newblock {\em Linear Algebra and its Applications}, 432(9):2257--2272, 2010.

\bibitem{cvetkovic2010towards3}
D.~Cvetkovi{\'c} and S.~K. Simi{\'c}.
\newblock Towards a spectral theory of graphs based on the signless laplacian,
  iii.
\newblock {\em Applicable Analysis and Discrete Mathematics}, pages 156--166,
  2010.

\bibitem{das2004laplacian}
K.~C. Das.
\newblock The laplacian spectrum of a graph.
\newblock {\em Computers \& Mathematics with Applications}, 48(5-6):715--724,
  2004.

\bibitem{davis2013circulant}
P.~J. Davis.
\newblock {\em Circulant matrices}.
\newblock American Mathematical Soc., 2013.

\bibitem{de2007old}
N.~M.~M. De~Abreu.
\newblock Old and new results on algebraic connectivity of graphs.
\newblock {\em Linear algebra and its applications}, 423(1):53--73, 2007.

\bibitem{de2011smallest}
L.~S. de~Lima, C.~S. Oliveira, N.~M.~M. de~Abreu, and V.~Nikiforov.
\newblock The smallest eigenvalue of the signless laplacian.
\newblock {\em Linear algebra and its applications}, 435(10):2570--2584, 2011.

\bibitem{fallat2005graphs}
S.~M. Fallat, S.~J. Kirkland, J.~J. Molitierno, and M.~Neumann.
\newblock On graphs whose laplacian matrices have distinct integer eigenvalues.
\newblock {\em Journal of Graph Theory}, 50(2):162--174, 2005.

\bibitem{fiedler1973algebraic}
M.~Fiedler.
\newblock Algebraic connectivity of graphs.
\newblock {\em Czechoslovak mathematical journal}, 23(2):298--305, 1973.

\bibitem{grone1990laplacian}
R.~Grone, R.~Merris, and V.~Sunder.
\newblock The laplacian spectrum of a graph.
\newblock {\em SIAM Journal on matrix analysis and applications},
  11(2):218--238, 1990.

\bibitem{horn2012matrix}
R.~A. Horn and C.~R. Johnson.
\newblock {\em Matrix analysis}.
\newblock Cambridge university press, 2012.

\bibitem{imrich2000s}
W.~Imrich.
\newblock S. klavžar, product graphs, 2000.

\bibitem{kirkland2000bound}
S.~Kirkland.
\newblock A bound on the algebraic connectivity of a graph in terms of the
  number of cutpoints.
\newblock {\em Linear and Multilinear Algebra}, 47(1):93--103, 2000.

\bibitem{kirkland2005completion}
S.~Kirkland.
\newblock Completion of laplacian integral graphs via edge addition.
\newblock {\em Discrete mathematics}, 295(1-3):75--90, 2005.

\bibitem{liu2010some}
M.~Liu and B.~Liu.
\newblock Some results on the laplacian spectrum.
\newblock {\em Computers \& Mathematics with Applications}, 59(11):3612--3616,
  2010.

\bibitem{merris1994degree}
R.~Merris.
\newblock Degree maximal graphs are laplacian integral.
\newblock {\em Linear algebra and its Applications}, 199:381--389, 1994.

\bibitem{merris1994laplacian}
R.~Merris.
\newblock Laplacian matrices of graphs: a survey.
\newblock {\em Linear algebra and its applications}, 197:143--176, 1994.

\end{thebibliography}
\end{document}